\documentclass[12pt,reqno]{amsart}
\usepackage{amsmath, amsthm, amssymb}
\usepackage{mathtools}
\usepackage{amsaddr}

\usepackage[margin=1in]{geometry}
\usepackage{parskip}
\usepackage[shortlabels]{enumitem}

\usepackage{xcolor}

\usepackage[bookmarks,colorlinks,breaklinks]{hyperref}
\hypersetup{linkcolor=red,citecolor=blue,
  filecolor=dullmagenta,urlcolor=blue}

\usepackage[numbers]{natbib}
\usepackage{cleveref}

\usepackage{graphicx}
\usepackage{tikz}
\usetikzlibrary{matrix, arrows}
\usetikzlibrary{cd}

\makeatletter
\def\thm@space@setup{%
  \thm@preskip=0.5em\thm@postskip=\thm@preskip%
}
\makeatother

\newtheoremstyle{named}{}{}{\\itshape}{}{\bfseries}{.}{.5em}{\thmnote{#3's }#1}
\theoremstyle{named}

\theoremstyle{plain}
\newtheorem{thm}{Theorem}[section]
\newtheorem{prop}[thm]{Proposition}
\newtheorem{lem}[thm]{Lemma}
\newtheorem{cor}[thm]{Corollary}
\theoremstyle{definition}

\theoremstyle{remark}
\newtheorem{rmk}[thm]{Remark}
\crefformat{footnote}{#2\footnotemark[#1]#3}


\newcommand{\GL}[1]{\mathrm{GL}_{#1}}
\newcommand{\gl}[2]{\mathrm{GL}_{#1}({#2})}
\newcommand{\SL}[1]{\mathrm{SL}_{#1}}
\renewcommand{\sl}[2]{\mathrm{SL}_{#1}({#2})}
\newcommand{\SO}[1]{\mathrm{SO}_{#1}}
\newcommand{\so}[2]{\mathrm{SO}_{#1}({#2})}

\newcommand{\Sp}[1]{\mathrm{Sp}_{#1}}
\renewcommand{\sp}[2]{\mathrm{Sp}_{#1}({#2})}
\newcommand{\GSp}[1]{\mathrm{GSp}_{#1}}
\newcommand{\gsp}[2]{\mathrm{GSp}_{#1}({#2})}
\newcommand{\Spin}[1]{\mathrm{Spin}_{#1}}
\newcommand{\spin}[2]{\mathrm{Spin}_{#1}({#2})}
\newcommand{\GSpin}[1]{\mathrm{GSpin}_{#1}}
\newcommand{\gspin}[2]{\mathrm{GSpin}_{#1}({#2})}



\newcommand{\bkp}{\bar\kappa}

\newcommand{\coh}[3]{H^{#1}({#2},{#3})}





\newcommand{\gal}[2]{\mathrm{Gal}{(#1 / #2)}}
\newcommand{\Gal}[1]{\Gamma_{#1}}
\newcommand{\galQ}{\Gamma_{\rats}}

\newcommand{\rats}{\mathbb{Q}}
\newcommand{\reals}{\mathbb{R}}

\newcommand{\cmplx}{\mathbb{C}}
\newcommand{\ints}{\mathbb{Z}}
\newcommand{\Qp}{\rats_p}
\newcommand{\Zp}{\ints_p}
\newcommand{\Ql}{\rats_l}
\newcommand{\Zl}{\ints_l}
\newcommand{\bQ}{\overline{\mathbb Q}}
\newcommand{\bQp}{\overline\rats_p}
\newcommand{\bZp}{\overline\ints_p}
\newcommand{\bQl}{\overline\rats_l}

\newcommand{\Fp}{\mathbb{F}_p}
\newcommand{\Fl}{\mathbb{F}_l}
\newcommand{\bFp}{\overline{\mathbb{F}}_p}

\newcommand{\bp}{\begin{pmatrix}}
\newcommand{\ep}{\end{pmatrix}}
\newcommand{\mc}{\mathcal}
\newcommand{\mf}{\mathfrak}
\newcommand{\mb}{\mathbb}
\newcommand{\mr}{\mathrm}


\newcommand{\frob}[1]{\mathrm{Fr}_{#1}}
\newcommand{\legndr}[2]{\left(\dfrac{#1}{#2}\right)}

\newcommand{\Zmod}[1]{\mathbb{Z}/#1 \mathbb{Z}}

\newcommand{\rsurj}{\twoheadrightarrow}

\newcommand{\op}[1]{\operatorname{#1}}
\newcommand{\ov}{\overline}
\newcommand{\un}{\underline}

\makeatother
\begin{document}

\subjclass[2010]{11F80}

\title{Motivic Galois representations valued in Spin groups\\
}

\author{Shiang Tang}

\begin{abstract}
Let $m$ be an integer such that $m \geq 7$ and $m \equiv 0,1,7 \mod 8$. We construct strictly compatible systems of representations of $\Gamma_{\mathbb Q} \to \mathrm{Spin}_m(\overline{\mathbb Q}_l) \xrightarrow{\mathrm{spin}} \mathrm{GL}_N(\overline{\mathbb Q}_l)$ that is potentially automorphic and motivic. As an application, we prove instances of the inverse Galois problem for the $\mathbb F_p$--points of the spin groups. For odd $m$, we compare our examples with the work of A. Kret and S. W. Shin (\cite{kret-shin}), which studies automorphic Galois representations valued in $\GSpin{m}$.
\end{abstract}

\maketitle 

\begin{center}
\textit{Dedicated to the memory of my mother, Que Ling Ping, 1954/07/08 - 2019/10/28}
\end{center}

\section{Introduction}

In \cite[8.4]{ser:mot}, Serre asks whether there are motives (over $\rats$,
say), whose motivic Galois groups are equal to a given semisimple (or more generally, reductive) algebraic group $G$. This question and its variants have been studied by many people, including N. Katz, M. Dettweiler, S. Reiter, Z. Yun, S. Patrikis and others; see for example \cite{yun:mot}, \cite{pat:exm} and \cite{bce+}. Most of the results in the literature concern exceptional algebraic groups. In this paper, we study a weaker version of Serre's question for spin groups. We find spin groups interesting because their faithful representations have large dimensions and they do not occur in the \'etale cohomology of smooth projective varieties in any obvious way. In \cite{kret-shin}, Kret and Shin prove the existence of Galois representations into $\GSpin{2n+1}$ corresponding to suitable cuspidal automorphic representations of $\GSp{2n}$ over totally real fields. Their Galois representations are motivic in the sense that they occur in the cohomology of certain Shimura varieties. Studying $\Spin{2n+1}$--valued Galois representations then appears to be a simple matter of passing from a reductive group to its (semisimple) derived subgroup, but the distinction can indeed be subtle: for example, when $n=1$, $\GSpin{3}=\GL{2}$, it is well-known that holomorphic modular forms (or elliptic curves) give rise to $p$-adic Galois representations $\rho: \galQ \to \gl{2}{\bQp}$ that are odd, i.e. $\det \rho(c)=-1$; in particular, they do not land in $\SL{2}$, neither can they be twisted into $\SL{2}$ in any obvious way. In fact, two-dimensional geometric Galois representations that are even (i.e. $\det \rho (c)=1$) are expected to come from Maass forms by the Fontaine--Mazur--Langlands conjectures.

Our main theorem is the following:

\begin{thm}\label{main theorem} (Proposition \ref{char zero spin rep} and Theorem \ref{spin compatible system})
Let $m$ be an integer such that $m \geq 7$ and $m \equiv 0,1,7 \mod 8$. There exists
a strictly compatible system \[R_{\lambda}: \Gal{\rats} \to \gl{N}{\ov{M}_{\lambda}}\] with distinct Hodge--Tate weights and with coefficients in a number field $M$ such that for a density one set of rational primes $l$ and for $\lambda \vert l$, $R_{\lambda}=\mr{spin} \circ r_{\lambda}$, where  $r_{\lambda}: \galQ \to \spin{m}{\ov{M}_{\lambda}}$ is a homomorphism with Zariski-dense image and $\mr{spin}: \Spin{m} \to \GL{N}$ is the spin representation.
Moreover, $\{R_{\lambda}\}$ is potentially automorphic and motivic in the following sense: 
\begin{itemize}
    \item There exist a totally real extension $F^+/\rats$ and a regular L--algebraic, cuspidal automorphic representation $\Pi$ of $\gl{N}{\mb A_{F^+}}$ such that 
$R_{\lambda}|_{\Gal{F^+}} \cong r_{\Pi,\iota_l}$, where $\iota_l: \bQl \xrightarrow{\sim} \cmplx$ is a fixed field isomorphism.
    \item There is a smooth projective variety $X/\rats$ and integers $i$ and $j$ such that $R_{\lambda}$ is a $\galQ$--subrepresentation of $\coh{i}{X_{\bQ}}{\bQl(j)}$.
\end{itemize}
\end{thm}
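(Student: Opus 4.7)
The plan is to establish the two cited results in turn: first the existence of a Zariski-dense characteristic-zero representation $r_\lambda: \galQ \to \spin{m}{\bQl}$ whose composition with $\mathrm{spin}$ has distinct Hodge--Tate weights, and then the propagation to a strictly compatible system with the stated automorphic and motivic properties.

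For the construction of a single such $r_\lambda$, my approach is Ramakrishna-style lifting, as adapted to general reductive targets by Patrikis. The first step is to produce a residual representation $\bar r : \galQ \to \spin{m}{\bFl}$ with three features: (i) an image large enough that any geometric characteristic-zero lift is automatically Zariski-dense (for instance, $\bar r$ essentially surjecting onto $\spin{m}{\Fl}$, which by Larsen--Pink forces any lift with this residual image to be Zariski-dense in $\Spin{m}$); (ii) an appropriate oddness condition, so that after composing with the spin representation one lands in an essentially self-dual $\GL{N}$-representation of orthogonal type, which is available precisely when $m \equiv 0,1,7 \bmod 8$; and (iii) a liftable local picture, namely Fontaine--Laffaille at $l$ with a regular cocharacter (producing distinct Hodge--Tate weights after composition with spin) together with carefully chosen tame ramification at auxiliary primes. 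A natural source of such $\bar r$ is the reduction of a compatible system landing in a classical subgroup of $\Spin{m}$, or a tensor/induction construction from smaller-dimensional pieces. Then I would apply Patrikis's generalization of Ramakrishna's method, adjoining Taylor--Wiles-type primes that annihilate the obstruction class in $\coh{2}{\galQ}{\op{Ad}(\bar r)}$, to produce a characteristic-zero lift $r_\lambda$ satisfying the prescribed local deformation conditions.

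For the compatible system and potential automorphy, the congruence hypothesis on $m$ guarantees that the (half-)spin representation is orthogonally self-dual, so $R_\lambda := \mathrm{spin} \circ r_\lambda$ is regular, essentially self-dual of orthogonal type, and has big residual image. These are exactly the inputs for the potential automorphy theorems of Barnet-Lamb--Gee--Geraghty--Taylor. After base change to a suitable totally real $F^+$, this yields a regular L-algebraic essentially self-dual cuspidal automorphic representation $\Pi$ of $\gl{N}{\mb A_{F^+}}$ whose attached Galois representations agree with $R_\lambda|_{\Gal{F^+}}$. The strictly compatible system $\{R_\lambda\}$ over $\rats$ then arises by propagating the algebraicity of the Satake parameters and Frobenius traces into a fixed number field $M$.

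For motivic realization I would appeal to the cohomological construction of the Galois representations attached to $\Pi$: in the odd case $m = 2n+1$, using the Kret--Shin realization of $\GSpin{m}$-valued representations in cohomology of $\GSp{2n}$-Shimura varieties, combined with a descent compatible with our $\Spin$-structure; in the even case, through orthogonal Shimura varieties. The main obstacle throughout is the residual step: simultaneously engineering $\bar r$ to have image large enough to force Zariski-density, to satisfy the oddness condition required for orthogonal essential self-duality, and to admit a lift (i.e.\ the global obstruction computation works out). These constraints are compatible precisely when $m \equiv 0,1,7 \bmod 8$, which is the source of the congruence hypothesis in the theorem.
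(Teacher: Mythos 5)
Your outline captures the right high-level shape (residual representation, lift, potential automorphy via BLGGT, cohomological realization), but the crucial residual step is where it breaks down, and it does so in a way that matters. You propose to arrange $\bar r$ so that its image is (essentially) all of $\spin{m}{\Fl}$, then invoke an open-image/Larsen--Pink principle to force Zariski density of any geometric lift. But producing a $\galQ$-representation with image $\spin{m}{\Fl}$ is precisely the inverse Galois problem for these groups, which is \emph{derived} in the paper as Corollary \ref{igp for spin} from the main theorem, not assumed as input; so this route is circular. The paper instead works with a deliberately \emph{small} residual image: $\bar r$ lands in $T_{\Fp}[l]\cdot\tilde D\subset N(T)(\Fp)$, built by realizing the $2$-torsion subgroup $D$ of the Weyl group as $\gal{L}{\rats}$ with Serre's condition $(S_2)$, lifting through $1\to\mu_2\to\tilde D\to D\to 1$ by a Brauer-group argument, and then twisting by classes in $H^1(\Gamma_{\rats,\Sigma},T[\cdot])$ (using \cite[Theorem 9.2.3]{nsw:coh}) to impose oddness and favorable local behavior at $p$ and an auxiliary prime $q$. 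Absolute irreducibility of $\mr{spin}\circ\bar r|_{\Gamma_{\rats(\mu_p)}}$ comes not from bigness but from the transitivity of $D$ on the spin weight lattice (Lemma \ref{transitivity}) together with a regular semisimple Frobenius; Zariski density of the characteristic-zero lift $r$ is forced not residually but by imposing a Steinberg deformation condition at $q$, so the image contains a regular unipotent, and then applying Saxl--Seitz \cite[Theorem A]{ss97}.

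Two further divergences are worth flagging. First, you propose Patrikis--Ramakrishna-style lifting, annihilating $H^2$ with auxiliary primes; the paper explicitly avoids this and instead uses a Khare--Wintenberger finiteness argument (\S\ref{sec:lifting gal}): it defines a $\Spin{m}$-valued deformation ring $R^{\mr{univ}}_G$ and a $\mc G_N$-valued one $R^{\mr{univ}}_{\mr{GL}}$, exhibits a surjection $R^{\mr{univ}}_{\mr{GL}}\rsurj R^{\mr{univ}}_G$ (Lemma \ref{defcompare}), and transfers $\mc O$-finiteness from \cite[Theorem 4.3.1]{blggt}; combined with a dimension bound from \cite{bg:Gdef} this yields a $\bQp$-point, and the resulting $r$ automatically satisfies the hypotheses of \cite[Theorem C]{blggt} by design of the local components. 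The Ramakrishna method, as the paper notes, does not by itself deliver potential automorphy; you would need to separately re-verify the BLGGT hypotheses. Second, for the motivic realization you suggest Kret--Shin for odd $m$ and ``orthogonal Shimura varieties'' for even $m$; there is no such cohomological realization of the (half-)spin representation for even $m$. The paper instead realizes $\{R_\lambda\}$ uniformly (for both parities) in the cohomology of Shin's unitary Shimura varieties \cite{shi:shi}, following \cite[\S 5]{bce+} but in (Case END) rather than (Case ST); Kret--Shin appears only as a separate comparison in \S\ref{sec:KS}, where the paper also explains that it yields nothing when $m\equiv 0\bmod 8$.
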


This is the spin-analog of the main result of \cite{bce+}. As an application, we prove new instances of the inverse Galois problem:
\begin{thm}(Corollary \ref{igp for spin})
For $m \geq 7$, $m \equiv 0,1,7 \mod 8$, $\spin{m}{\Fp}$ is the Galois group of a finite Galois extension of $\rats$ for $p$ belonging to a set of rational primes of positive density.
\end{thm}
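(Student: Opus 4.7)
Starting from the compatible system $\{r_\lambda\}$ of Theorem~\ref{main theorem}, the goal is to produce, for a positive density set of primes $p$, a surjection $\galQ \rsurj \spin{m}{\Fp}$; the fixed field of the kernel then realizes $\spin{m}{\Fp}$ as a Galois group over $\rats$. The plan has two ingredients: a Chebotarev step to arrange that the residue field at some $\lambda \mid p$ is precisely $\Fp$, and a ``big image'' step to show that reduction modulo such a $\lambda$ is surjective.

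For the first ingredient, I restrict to primes $p$ that split completely in the number field $M$; by Chebotarev these form a set of density $1/[M:\rats] > 0$, and for each such $p$ any $\lambda \mid p$ has residue field $\Fp$. A standard compactness and conjugation argument then places $r_\lambda$ inside $\spin{m}{\mc{O}_{M_\lambda}}$, yielding the reduction $\bar r_\lambda \colon \galQ \to \spin{m}{\Fp}$. For the second ingredient, since $r_\lambda$ has Zariski-dense image in $\Spin{m}$ and the system $\{r_\lambda\}$ is strictly compatible (so that Frobenius characteristic polynomials in the spin representation are algebraic and independent of $\lambda$), a Larsen-type big-image theorem for compatible systems valued in semisimple groups (as deployed in \cite{bce+}, following Larsen, with refinements by Hui and others) implies that for a density-one set of rational primes $l$, the reduction $\bar r_\lambda$ is surjective onto $\spin{m}{\Fp}$. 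Intersecting these two sets of primes yields a set of positive density for which the inverse Galois realization holds.

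The main obstacle is the big-image step: for cofinally many $\lambda$, one must rule out $\bar r_\lambda(\galQ)$ being contained in a proper subgroup of $\spin{m}{\Fp}$. The strategy combines (i) the fact that Zariski-density together with strict compatibility forces the image to contain ``sufficiently generic'' regular semisimple elements for all but a density-zero set of $l$, with (ii) the classification of maximal subgroups of the finite quasisimple classical groups (Aschbacher, Liebeck--Seitz), which ensures that such generic elements already generate all of $\spin{m}{\Fp}$. One should also confirm that the density-zero set of $l$ excluded by the big-image theorem does not interact badly with the Chebotarev splitting set for $M$; the residual positive density persists. The execution is parallel to the exceptional-group case carried out in \cite{bce+}, adapted to the spin groups arising here.
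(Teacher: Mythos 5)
Your proposal follows essentially the same route as the paper: Chebotarev for primes splitting in the coefficient field $M$, followed by a Larsen-type big-image theorem, and then an intersection of density sets. Two points are worth tightening. First, the compactness/conjugation step does not by itself place $R_\lambda$ (or $r_\lambda$) over $\mc{O}_{M_\lambda}$ — one must first invoke a rationality result such as \cite[Lemma 5.3.1,(3)]{blggt}, after which enlarging $M$ and restricting to completely split primes forces $M_\lambda=\Ql$. Second, your sub-strategy of combining generic regular semisimple elements with the Aschbacher/Liebeck–Seitz classification is an unnecessary detour: \cite[Theorem 3.17]{lar:max} already delivers directly that, for a relative-density-one subset of $\mr{cs}(M/\rats)$, the image $R_\lambda(\galQ)$ is a hyperspecial maximal compact subgroup of $G_\lambda(\Ql)$, which for $\lambda$ in the density-one set where $G_\lambda=\Spin{m}$ (from Theorem \ref{spin compatible system}) gives $R_\lambda(\galQ)=\spin{m}{\Zl}\rsurj\spin{m}{\Fl}$. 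With those adjustments your argument matches the paper's.
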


Let us explain the strange-looking congruence condition on $m$ appeared in Theorem \ref{main theorem}. Let $T$ be a maximal split torus of $\Spin{m}$ and let $W=N(T)/T$ be the Weyl group. Then the integer $m$ satisfies the above congruence condition if and only if the longest element $w^0 \in W$ acts as $-1$ on $X^{\bullet}(T)$ and $w^0$ has a representative in $N(T)$ of order 2 (\cite[\S 3]{ah:weyl}; see also Lemma \ref{NT sequence for Spin}). Having such an involution at hand is crucial in standard Galois deformation theoretic arguments: one typically constructs an appropriate mod $p$ representation $\bar r: \galQ \to \spin{m}{\bFp}$ which is odd in the sense of \cite[Definition 1.2]{fkp:reldef} (which will hold if the complex conjugation maps into the conjugacy class of the above involution in $\Spin{m}$), then deform it to a geometric (in the sense of Fontaine--Mazur) characteristic zero representation using either Ramakrishna style techniques (originated in \cite{ram02} and sublimed in \cite{fkp:reldef}) or Khare--Wintenberger style arguments (\cite{kw:serre}; see \S 3). Oddness of $\bar r$ is crucial in both methods. 

Due to the automorphic nature of our construction, the Hodge--Tate weights of the compatible system in Theorem \ref{main theorem} are distinct. In contrast, suppose $w^0$ acts as $-1$ on $X^{\bullet}(T)$ but it does not lift to an involution in $N(T)$ (which happens if and only if $m \equiv 3,4,5 \mod 8$), then we do not expect Galois representations $r: \galQ \to \spin{m}{\bQl}$ such that $\mr{spin} \circ r: \galQ \to \gl{N}{\bQl}$ comes from a pure motive with \emph{regular} Hodge structure. In fact, the spin representation $\mr{spin}: \Spin{m} \to \GL{N}$ is valued in $\Sp{N}$ for $m \equiv 3,4,5 \mod 8$ by Lemma \ref{spin rep}. If there were such a motive, let $V$ be its real Hodge structure such that $V \otimes_{\reals} \cmplx=\bigoplus_{p+q=w} V_{p,q}$ with $\ov{V}_{p,q}=V_{q,p}$ and $w \in \ints$. By symmetry of the symplectic torus and regularity, we have $\dim V_{p,q}=\dim V_{-p,-q}=1$ for all $p,q$ appearing in the direct sum. In particular, $w=0$ and the complex conjugation action corresponds to the longest element in the Weyl group of $\Sp{N}$, which lifts to an order 4 element in the normalizer of the torus of $\Sp{N}$, a contradiction.  \footnote{This follows from Lemma 3.1 (with $\delta=1$) and the proof of Lemma 4.12 of \cite{ah:weyl}. One can also verify this directly.} 
Assuming the Fontaine--Mazur conjecture, we can express this in purely Galois theoretic terms: for $m \equiv 3,4,5 \mod 8$, there should not exist $\ell$-adic Galois representations 
\[\galQ \to \spin{m}{\bQl} \xrightarrow{\mr{spin}} \gl{N}{\bQl}\]
that are unramified everywhere, potentially semistable at $l$ with distinct Hodge--Tate weights. For example, let $m=3$, then $\Spin{3} \cong \SL{2}$ and the spin representation is the canonical injection $\SL{2} \xhookrightarrow{} \GL{2}$. In this case, our speculation follows from a theorem of Calegari (\cite[Theorem 1.2]{cal:evenII}) under some mild hypotheses. 


\textbf{Methods and organization of this paper}. The method we use in proving Theorem \ref{main theorem} is very similar to that of \cite{bce+}.
In \S \ref{sec:mod p rep}, we begin by constructing a mod $p$ representation $\bar r: \galQ \to \spin{m}{\bFp}$ (for $m$ satisfying the congruence condition in Theorem \ref{main theorem}) related to the action of
the Weyl group of $\Spin{m}$ on the weight space of the spin representation for which $\mr{spin} \circ \bar r$ satisfies the assumptions of the automorphic lifting theorems in \cite{blggt}. Succeeding in constructing such a representation requires, in addition to the techniques used in \cite[\S 2]{bce+}, a detailed calculation on the structure of $\Spin{m}$ and the crucial observation that a certain part of the Weyl group acts transitively on the weight lattice of the spin representation (Lemma \ref{transitivity}). We then deform $\bar r$ to a geometric representation $r: \galQ \to \spin{m}{\bQp}$ that is Steinberg at a finite place using a version of the Khare--Wintenberger argument: similar arguments have appeared in \cite[\S 3]{bce+} and \cite[\S 3]{pt:gspin}, we present an axiomatized version in \S \ref{sec:lifting gal}. Then \cite[Theorem C]{blggt} implies that $\mr{spin} \circ r$ is potentially automorphic and is part of a strictly compatible system $\{R_{\lambda}\}$. We wish to show that the Zariski closure of the image of $R_{\lambda}$ equals $\Spin{m}$ for \emph{all} $\lambda$. Following \cite[\S 4]{bce+}, we exploit the fact that the image of $r$ contains a regular unipotent element of $\Spin{m}$ and use ideas of Larsen and Pink (\cite{lp:lind}), where the key is to show that $R_{\lambda}$ is irreducible for all $\lambda$. This is proven for $E_6$ in \cite{bce+} using elementary combinatorial properties of the formal character of $E_6$, which does not carry over to $\Spin{m}$ since the rank of the latter can get arbitrarily large. Instead, we get away with a weaker statement by invoking \cite[Theorem D]{blggt} (which relies on Larsen's work \cite{lar:max}): this is where the density one condition in Theorem \ref{main theorem} came from. \footnote{One could conceivably impose additional local deformation conditions (for instance, those corresponding to supercuspidal representations) to force $R_{\lambda}$ to be irreducible for all $\lambda$, but I do not know how to do this at present.} This is done in \S \ref{sec:compatible systems}. In \S \ref{sec:coh}, we show that $\{R_{\lambda}\}$ occurs in the cohomology of a smooth projective variety following \cite[\S 5]{bce+}. In \S \ref{sec:KS}, we compare our construction with the work of Kret and Shin (\cite{kret-shin}): we explain how the main theorem of \cite{kret-shin} yields a stronger version of Theorem \ref{main theorem} for $m \equiv 1,7 \mod 8$. We also observe that for $m \equiv 3,5 \mod 8$, \cite{kret-shin} (in which the automorphic representations are regular L--algebraic) does not yield $\Spin{m}$--valued Galois representations (Lemma \ref{C-algebraic}); moreover, \cite{kret-shin} has no implication on the case when $m \equiv 0 \mod 8$.

\textbf{Acknowledgements}.
I thank Stefan Patrikis for helpful conversations and for suggesting me to compare my construction with \cite{kret-shin}. I thank Patrick Allen for carefully reading through a draft of this paper and making useful comments. 
Last but not least, I thank my mother, who was taken to a better place on October 28, 2019, for her constant encouragements and unconditional support on my mathematical endeavors when I was an undergraduate student in China. Without those, I would not have had the opportunity to earn a doctorate in Mathematics in the United States, much less been equipped to write the present paper. This work is dedicated to the memory of her with enormous gratitude and deepest respect. 

\subsection{Notation}\label{sec:notation}
Let $F$ be a field. Fix an algebraic closure $\ov{F}$ of $F$ and write $\Gamma_F$ for the absolute Galois group $\gal{\ov{F}}{F}$ of $F$. If $F$ is a number field, then for each place $v$ of $F$, we fix an embedding $\ov{F} \to \ov{F_v}$ into an algebraic closure of $F_v$, which gives rise to an injective group homomorphism $\Gamma_{F_v} \to \Gamma_F$. 
For any finite place $v$, let $k_v$ be the residue field of $v$ and let $\frob{v} \in \Gamma_{k_v}$ be the arithmetic Frobenius.
If $H$ is a group (typically the points over a finite field or a $p$-adic field of a reductive algebraic group), and there is a continuous group homomorphism $r: \Gamma_F \to H$, we will sometimes write $r|_v$ 
for $r|_{\Gamma_{F_v}}$, the restriction of $r$ to the decomposition group $\Gamma_{F_v}$. If $H$ acts on a finite-dimensional vector space $V$, we write $r(V)$ for the $\Gamma_F$--module induced by precomposing this action with $r$. (Typically $H$ will be a reductive algebraic group and $V$ will be its Lie algebra equipped with the adjoint action of $H$.) Let $\kappa: \Gamma_F \to \Zp^{\times}$ be the $p$-adic cyclotomic character and $\bkp$ be its reduction modulo $p$. We will always assume $p \neq 2$, and our main theorems will make stronger hypotheses on $p$.  

We recall here some deformation-theoretic terminology. 
Given a topologically finite-generated profinite group $\Gamma$, a finite extension $E/\Qp$ with ring of integers $\mc O$ and residue field $k$, a reductive algebraic group $G$ defined over $\mc O$ and a continuous homomorphism $\bar{r}: \Gamma \to G(k)$, let $R^{\Box}_{\mc{O}, \bar{r}}$ be the universal lifting ring representing the functor sending a complete local noetherian $\mc O$--algebra $R$ with residue field $k$ to the set of lifts $r: \Gamma \to G(R)$ of $\bar{r}$. We will always leave the $\mc{O}$ implicit, writing only $R^{\Box}_{\bar{r}}$, and at various points in the argument we enlarge $\mc{O}$; see \cite[Lemma 1.2.1]{blggt} for a justification of (the harmlessness of) this practice. We write $R_{\bar r}^{\Box} \otimes \bQp$ for $R_{\mc O, \bar r}^{\Box} \otimes_{\mc O} \bQp$ for any particular choice of $\mc{O}$, and again by \cite[Lemma 1.2.1]{blggt}, $R_{\bar r}^{\Box} \otimes \bQp$ is independent of the choice of $E$.

When $K/\Qp$ is a finite extension and $\Gamma$ is $\Gamma_K$, we consider quotients of $R^{\Box}_{\bar{r}}$ having fixed inertial type and $p$-adic Hodge type. The fundamental analysis here is due to Kisin (\cite{kis:pst}), and the state of the art, and our point of reference, is \cite{bg:Gdef}, and we refer there for details. We will index $p$-adic Hodge types of deformations $r \colon \Gamma_K \to G(\mc{O})$ by collections $\underline{\mu}(r)=\{\mu(r, \tau)\}_{\tau \colon K \to \bQp}$ of (conjugacy classes of) Hodge--Tate co-characters, and write $R^{\Box, \underline{\mu}(r)}_{\bar{r}}$ for the $\Zp$-flat quotient of $R^{\Box}_{\bar{r}}$ whose points in finite local $E$--algebras are precisely those of $R^{\Box}_{\bar{r}}$ that are moreover potentially semi-stable with $p$-adic Hodge type $\underline{\mu}(r)$. We likewise consider the quotients with fixed inertial type $\sigma$, $R^{\Box, \un{\mu}(r), \sigma}_{\bar{r}}$, referring to \cite[\S 3.2]{bg:Gdef} for details.

We recall some basic facts on spin groups and spin representations, see \cite[Lecture 20]{fh:rep} for more details. Let $m \geq 3$ be an integer. Consider the symmetric form 
$$(x,y)=x_1y_{m}+x_2y_{m-1}+\cdots+x_{m}y_1$$ on $V=\rats^{m}$ with associated quadratic form $Q(x)=(x,x)$. Let $C(Q)$ be the Clifford algebra associated to $(V,Q)$. It is equipped with an embedding $V \subset C(Q)$ which is universal for maps $f: V \to A $ into associative rings $A$ satisfying $f(x)^2=Q(x)$ for all $x \in V$. The algebra $C(Q)$ has a $\Zmod{2}$--grading, $C(Q)=C(Q)^+\oplus C(Q)^-$, induced from the grading on the tensor algebra. On the Clifford algebra $C(Q)$ we have an unique anti-involution * that is determined by $(v_1\cdots v_r)^*=(-1)^rv_r\cdots v_1$ for all $v_1, ... ,v_r \in V$. We define for all $\rats$--algebras $R$, $$\gspin{m}{R}=\{ g \in (C(Q)^+ \otimes R)^{\times}: gVg^* \subset V \}.$$ Let $N: C(Q) \to \rats^{\times}$, $x \to xx^{*}$ be the Clifford norm. It induces a character $N: \GSpin{m} \to \mb G_m$. We define $\Spin{m}$ to be the kernel of $N$. The action of the group $\GSpin{m}$, resp. $\Spin{m}$ stabilizes $V \subset C(Q)$, which induces a surjection $\GSpin{m} \rsurj \SO{m}$, resp. $\Spin{m} \rsurj \SO{m}$. Denote by $\mu_2=\{ \pm 1\}$ the kernel of $\Spin{m} \rsurj \SO{m}$.

Now we turn to spin representations. We keep the notation in the paragraph above with $\rats$ replaced by $\cmplx$. Let $\mf{so}_m(\cmplx):=\mf{so}(Q)=\{X \in \mr{End}(V): (Xv,w)+(v,Xw)=0, \forall v,w \in V \}$. By \cite[Lemma 20.7]{fh:rep}, there is a natural embedding of Lie algebras $\mf{so}(Q) \to C(Q)^{even}$. Let $n=[m/2]$. The weight lattice of $\mf{so}_{m}\cmplx$ may be described as the submodule of $\sum_{1 \leq i \leq n} \rats \chi_i$ spanned by $\chi_1, ... ,\chi_n$ and $\frac{1}{2}(\sum_i \chi_i)$. 
Suppose $m=2n$ is even, write $V=W \oplus W'$, where $W$ and $W'$ are isotropic subspaces of $(V,Q)$. By \cite[Lemma 20.9]{fh:rep}, there is an isomorphism of algebras $C(Q) \cong \mr{End}(\bigwedge^*W)$. We have $\bigwedge^*W=\bigwedge^{even}W \oplus \bigwedge^{odd}W$, it follows that $\mf{so}(Q) \subset C(Q)^{even} \cong \mr{End}(\bigwedge^{even}W)\oplus \mr{End}(\bigwedge^{odd}W)$. Hence we have two representations of $\mf{so}_{m}\cmplx$, $\bigwedge^{even}W$ and $\bigwedge^{odd}W$. We let $S=\bigwedge^{even}W$ for even $n$, and let $S=\bigwedge^{odd}W$ for odd $n$. By \cite[Lemma 20.15]{fh:rep}, $S$ is irreducible of dimension $2^{n-1}$ whose weights are $\frac{1}{2}(\sum_i \pm \chi_i)$ where the number of minus signs is even. We call $S$ the spin representation of $\mf{so}_m(\cmplx)$. Now suppose $m=2n+1$ is odd, write $V=W \oplus W' \oplus U$, where $W$ and $W'$ are isotropic subspaces of $(V,Q)$, and $U$ is a one-dimensional subspace perpendicular to them. In this case, we have $\mf{so}(Q) \subset C(Q)^{even} \cong \mr{End}(\bigwedge^*W)$. We let $S=\bigwedge^*W$. By \cite[Proposition 20.20]{fh:rep}, $S$ is an irreducible representation of $\mf{so}_{m}\cmplx$ of dimension $2^n$ whose weights are $\frac{1}{2}(\sum_i \pm \chi_i)$. We again call $S$ the spin representation of $\mf{so}_m(\cmplx)$. The spin representation may be viewed as a group homomorphism $\mr{spin}: \Spin{m} \to \GL{2^n}$, resp. $\mr{spin}: \Spin{m} \to \GL{2^{n-1}}$ for odd $m$, resp. even $m$. The following fact is contained in \cite[Exercise 20.38]{fh:rep}.

\begin{lem}\label{spin rep}
There is nondegenerate bilinear form $\beta$ on $S$ such that if $m=2n+1$, $\beta$ is symmetric when $n \equiv 0,3 \mod 4$ and skew-symmetric otherwise; and if $m=2n$ with $n$ even, $\beta$ is symmetric when $n \equiv 0 \mod 4$ and skew-symmetric otherwise. 
Therefore, if $m=2n+1$, the spin representation is valued in $\SO{2^n}\cmplx$ for $n \equiv 0,3 \mod 4$ and valued in $\Sp{2^n}\cmplx$ for $n \equiv 1,2 \mod 4$; and if $m=2n$ with $n$ even, the spin representation is valued in $\SO{2^{n-1}}\cmplx$ for $n \equiv 0 \mod 4$ and valued in $\Sp{2^{n-1}}\cmplx$ for $n \equiv 2 \mod 4$.
\end{lem}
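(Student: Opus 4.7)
The plan is to extract $\beta$ from the canonical anti-involution $*$ on the Clifford algebra $C(Q)$ and then determine its symmetry type by a short explicit calculation. First I would note that in each case of the lemma, $S$ is a simple $C(Q)^{+}$-module on which $C(Q)^{+}$ acts via an isomorphism onto $\mr{End}(S)$ (or onto one of its two simple factors in the even $m$ case). Transporting the anti-involution $*|_{C(Q)^{+}}$ across this identification gives an anti-automorphism of $\mr{End}(S)$, which must be adjoint with respect to some nondegenerate bilinear form $\beta$ on $S$, unique up to a scalar by Schur. Concretely, $\beta$ is characterized by
$$\beta(x \cdot s, t) = \beta(s, x^{*} \cdot t), \qquad x \in C(Q)^{+}, \ s, t \in S.$$
Because $*$ is an involution, the transposed form $\beta^{T}(s,t) := \beta(t,s)$ satisfies the same relation, so $\beta^{T} = c\beta$ with $c \in \{\pm 1\}$, making $\beta$ either symmetric or skew-symmetric. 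Moreover, $g^{*} = g^{-1}$ for $g \in \Spin{m}$ because the Clifford norm $N(g) = g g^{*}$ equals $1$, so $\beta$ is $\Spin{m}$-invariant, and $\mr{spin}$ lands in the orthogonal group of $\beta$ when $c = +1$ and in the symplectic group when $c = -1$.

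To compute $c$, I would evaluate $\beta$ on a concrete pair of vectors. Fix a basis $e_{1}, \ldots, e_{n}$ of $W$ and a dual basis $f_{1}, \ldots, f_{n}$ of $W'$ with $(e_{i}, f_{j}) = \delta_{ij}$, and set $s_{0} = 1 \in \bigwedge^{0} W$ and $s_{\mathrm{top}} = e_{1} \wedge \cdots \wedge e_{n} \in \bigwedge^{n} W$. In each case treated by the lemma (odd $m$; or even $m$ with $n$ even), both vectors lie in $S$. Under the Clifford action each $e_{i}$ acts by wedging, so $(e_{1} \cdots e_{n}) \cdot s_{0} = s_{\mathrm{top}}$ and $(e_{n} \cdots e_{1}) \cdot s_{0} = (-1)^{n(n-1)/2} s_{\mathrm{top}}$. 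Using $(e_{1} \cdots e_{n})^{*} = (-1)^{n} e_{n} \cdots e_{1}$ and normalizing $\beta(s_{0}, s_{\mathrm{top}}) = 1$, the adjointness relation yields
$$\beta(s_{\mathrm{top}}, s_{0}) = \beta\bigl(s_{0}, (-1)^{n} e_{n} \cdots e_{1} \cdot s_{0}\bigr) = (-1)^{n(n+1)/2}.$$
Thus $c = (-1)^{n(n+1)/2}$, which equals $+1$ exactly for $n \equiv 0, 3 \pmod{4}$ and $-1$ for $n \equiv 1, 2 \pmod{4}$, matching the lemma in every case.

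The main obstacle is really in the setup: carefully identifying which simple factor of $C(Q)^{+}$ acts on $S$ in the even case, and verifying that the bilinear form extracted from $*$ restricts to a nondegenerate form on $S$ rather than landing as a pairing between $S$ and the other simple module. Once that is in hand, the sign calculation above is a routine application of the Clifford relations, and independence of the choice of Lagrangian $W \subset V$ (and of the auxiliary line $U$ when $m$ is odd) is automatic, since any two such choices are conjugate under $\mathrm{O}(Q)$, which acts on $C(Q)$ by $*$-preserving automorphisms.
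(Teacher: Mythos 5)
The paper gives no proof of this lemma, citing it as \cite[Exercise 20.38]{fh:rep}, so there is no in-paper argument to compare against; your proposal is the standard construction one would give for that exercise. The conceptual backbone is right: extract $\beta$ from the canonical anti-involution $*$, conclude $\beta^T = \pm\beta$ from $*\circ* = \mathrm{id}$ and Schur, and get $\Spin{m}$-invariance from $gg^*=N(g)=1$; the sign $c=(-1)^{n(n+1)/2}$ is correct and matches the lemma in every case.

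Two points need tightening, though neither affects the answer. First, when $m=2n+1$ with $n$ odd, $e_1\cdots e_n$ lies in $C(Q)^-$, not $C(Q)^+$, so the adjointness identity $\beta(x\cdot s,t)=\beta(s,x^*\cdot t)$ as you stated it (for $x\in C(Q)^+$) does not immediately apply to it. The clean fix is to use $u_0e_1\cdots e_n\in C(Q)^+$ instead, noting that $u_0$ acts on $\bigwedge^kW$ by $(-1)^k$ and $u_0^*=-u_0$; carrying this through gives $\beta(s_{\mathrm{top}},s_0)=-(-1)^{n(n-1)/2}\beta(s_0,s_{\mathrm{top}})$, which equals $(-1)^{n(n+1)/2}\beta(s_0,s_{\mathrm{top}})$ precisely because $n$ is odd. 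Second, normalizing $\beta(s_0,s_{\mathrm{top}})=1$ presupposes that this value is nonzero; it is, but one should say why: $T$-equivariance forces $\beta$ to pair the weight-$\lambda$ line with the weight-$(-\lambda)$ line, and $s_0$, $s_{\mathrm{top}}$ span the two extreme weight spaces $\mp\frac{1}{2}\sum_i\chi_i$, so nondegeneracy forces the pairing to be nonzero. Finally, the obstacle you flag for even $m$ (that $*$ should preserve, not swap, the factors $\mr{End}(\bigwedge^{even}W)$ and $\mr{End}(\bigwedge^{odd}W)$) is resolved by noting that the volume element $\omega=v_1\cdots v_{2n}$ satisfies $\omega^*=(-1)^n\omega$, so $*$ fixes the two central idempotents of $C(Q)^+$ and descends to each factor exactly when $n$ is even, which is exactly the range of the lemma.
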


\section{Construction of residual Galois representations}\label{sec:mod p rep}

Before we construct our mod $p$ Galois representations into $\Spin{m}$, we make some preliminary calculations. We follow the notation of \S 1.1. 

We have the following explicit description of the two-fold covering 
$\pi: \Spin{m} \rsurj \SO{m}$ (see the proof of \cite[Proposition 20.28]{fh:rep}). Let $g \in \SO{m}=\mr{SO}(Q)$, we may write $g$ as a product of reflections $r_{w_1}\cdots r_{w_r}$ for $w_i \in V$ with $Q(w_i)=-1$ ($r$ is necessarily even), then the two elements in $\pi^{-1}(g)$ are $\pm w_1\cdots w_r$. We fix the following ordered basis for $V=\rats^m$: for $m=2n$, $e_1, ... ,e_n,f_n, ... ,f_1$ satisfying $(e_i,f_i)=1$ for $1\leq i \leq n$, $(e_i,f_j)=0$ for $1 \leq i \neq j \leq n$, and $(e_i,e_j)=(f_i,f_j)=0$ for $1 \leq i,j \leq n$; for $m=2n+1$, $e_1, ... ,e_n, u_0, f_n, ... ,f_1$ satisfying $(e_i,f_i)=1$ for $1\leq i \leq n$, $(e_i,f_j)=0$ for $1 \leq i \neq j \leq n$, $(e_i,e_j)=(f_i,f_j)=0$ for $1 \leq i,j \leq n$, $(u_0,u_0)=1$, $(u_0,e_i)=(u_0,f_i)=0$ for $1\leq i \leq n$. With this basis, we have for any $\rats$--algebra $R$
$$\SO{m}(R)=\{ g \in \sl{m}{R}: gJg^t=J \}$$
where $J=[a_{ij}]_{1 \leq i,j \leq m}$ whose nonzero entries are exactly $a_{i,m+1-i}=1$ for $1 \leq i \leq m$. Then we have a standard maximal torus $T_{SO}$ defined by $T_{SO}(R)=\{diag(t_1, ... ,t_n,1,t_n^{-1}, ... ,t_1^{-1}): t_i \in R^{\times} \}$ for any $\rats$--algebra $R$, where we omit the middle entry 1 when $m=2n$ is even. We let $T=\pi^{-1}T_{SO}$ be the corresponding maximal torus of $\Spin{m}$. The Weyl group $W$ is then isomorphic to $N(T_{SO})/T_{SO}$, which is a semidirect product $D \rtimes S_n$, where $D$ is described in what follows. 
We let $D$ be $\{\pm 1\}^n$ when $m=2n+1$, and the subgroup of $\{\pm 1\}^{n}$ consisting of elements with an even number of minus signs when $m=2n$. It acts on $T_{SO}$ in the following way: For any element $\epsilon=(\epsilon_i) \in D$ and $t=diag(t_1, ... ,t_n,1,t_n^{-1}, ... ,t_1^{-1}) \in T_{SO}$, define $t^{\epsilon}:=diag(t_1^{\epsilon_1}, ... ,t_n^{\epsilon_n},1,t_n^{-\epsilon_n}, ... ,t_1^{-\epsilon_1}) \in T_{SO}$. The symmetric group $S_n$ acts on $T_{SO}$ by permuting the diagonal entries: $\forall \sigma \in S_n$, $t^{\sigma}:=diag(\sigma(t_1), ... ,\sigma(t_n),1,\sigma(t_n)^{-1}, ... ,\sigma(t_1)^{-1})$. 

Let $w^0$ be the longest element in $W$. When $m=2n+1$, $w^0$ acts as $-1$ on $X^{\bullet}(T_{SO})$; when $m=2n$, $w^0$ acts as $-1$ on $X^{\bullet}(T_{SO})$ if and only if $n$ is even. In both cases, $w^0=(-1,...,-1) \in D$.

\begin{lem}\label{transitivity}
The group $D \leq W$ acts simply transitively on $\Lambda_{spin}:=\{ \frac{1}{2}(\sum_{i=1}^n \pm\chi_i) \}$, the weight lattice of the spin representation of $\Spin{m}$.
\end{lem}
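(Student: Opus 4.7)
The plan is a direct unpacking of the explicit description of $W = D \rtimes S_n$ constructed in the preceding paragraph, reducing the statement to an elementary parity count.

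First, I would compute how $D$ acts on $X^{\bullet}(T_{SO})$. From the formula $t^{\epsilon} = \mr{diag}(t_1^{\epsilon_1},\ldots,t_n^{\epsilon_n},1,t_n^{-\epsilon_n},\ldots,t_1^{-\epsilon_1})$ given above, one reads off that $\epsilon = (\epsilon_i) \in D$ sends $\chi_i$ to $\epsilon_i \chi_i$, and hence sends any weight $\frac{1}{2}\sum_i \eta_i \chi_i$ to $\frac{1}{2}\sum_i \epsilon_i \eta_i \chi_i$.

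Using this formula, I would verify transitivity and freeness in turn. For transitivity, given two weights $\frac{1}{2}\sum \eta_i \chi_i$ and $\frac{1}{2}\sum \eta'_i \chi_i$ in $\Lambda_{spin}$, the sign vector $\epsilon_i := \eta_i \eta'_i$ carries the first to the second. In the odd case $m = 2n+1$, this $\epsilon$ lies in $D = \{\pm 1\}^n$ automatically. In the even case $m = 2n$, all weights of $S$ share a common parity on the number of $-1$'s among the $\eta_i$ (even if $n$ is even, odd if $n$ is odd, according to the case division defining $S$), so $\epsilon$ has an even number of $-1$'s and thus lies in $D$. Freeness is immediate: if $\epsilon$ fixes some $\frac{1}{2}\sum \eta_i \chi_i$, then $\epsilon_i \eta_i = \eta_i$ forces each $\epsilon_i = 1$.

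A final sanity check $|D| = |\Lambda_{spin}|$ (equal to $2^n$ for $m = 2n+1$ and to $2^{n-1}$ for $m = 2n$) confirms that transitivity plus freeness is simple transitivity. The only point demanding care is the parity bookkeeping in the even case, which is precisely why $D$ was taken to be the index-$2$ subgroup of $\{\pm 1\}^n$ there; I do not foresee any serious obstacle.
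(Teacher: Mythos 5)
Your proof is correct and is exactly what the paper's one-line ``follows immediately from the explicit action of $D$'' is gesturing at, just written out in full. One small slip: in the even case the weights of $S$ actually always have an \emph{even} number of minus signs regardless of the parity of $n$ (cf.\ the paper's citation of Fulton--Harris Lemma 20.15; for $S=\bigwedge^{\mathrm{odd}}W$ with $n$ odd the count is $n-|I|$ with $|I|$ odd, hence even), but since your argument only uses that the parity is constant across $\Lambda_{spin}$, this does not affect its validity.
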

\begin{proof}
This follows immediately from the explicit action of $D$ on $T$ described above.
\end{proof}

\begin{lem}\label{NT sequence for SO}
The group homomorphism $N(T_{SO}) \rsurj W$ admits a section $s: W \to N(T_{SO})$. Indeed, for $\sigma \in S_n$, let $M_{\sigma}$ be the corresponding standard $n \times n$ permutation matrix, then $s(\sigma)=\bp M_{\sigma}&0\\0&M_{\sigma}^{at} \ep$, where $M_{\sigma}^{at}$ is the anti-diagonal transpose of $M_{\sigma}$; for $\epsilon=(\epsilon_i) \in D$, $s(\epsilon)=d_{\epsilon}$, where $d_{\epsilon}$ is the $m \times m$ matrix obtained from the identity matrix $I_m$ by swapping the $i$-th and the $(m+1-i)$-th columns whenever $\epsilon_i=-1$ and leaving the rest of the columns unchanged when $m=2n$ is even, and $d_{\epsilon}$ is the $m \times m$ matrix obtained from the identity matrix $I_m$ by performing the column operations above and replacing the $(n+1,n+1)$--entry with $(-1)^{|\{1 \leq i \leq n: \epsilon_i=-1\}|}$ when $m=2n+1$ is odd. 
\end{lem}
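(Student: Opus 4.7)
The plan is to verify the assertions in three stages: first that the given formulas produce elements of $\SO{m}$, second that these elements normalize $T_{SO}$ and induce the prescribed Weyl group elements under conjugation, and third that $s$ respects the group law on the generating subgroups $S_n$ and $D$ and satisfies the semidirect product compatibility $s(\sigma)s(\epsilon)s(\sigma)^{-1}=s(\sigma\cdot\epsilon)$.

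For $\sigma\in S_n$, the key identity is $M_{\sigma}^{at}=J_n M_{\sigma} J_n$, where $J_n$ is the $n\times n$ anti-diagonal identity. Since $J$ has block form $\bp 0 & J_n\\ J_n & 0\ep$ (with a central $1$ inserted when $m=2n+1$), a direct expansion of $s(\sigma)\,J\,s(\sigma)^t$ using the orthogonality of $M_\sigma$ and the identity above yields $J$, and $\det s(\sigma)=1$ is automatic. Writing $t=\mr{diag}(t_1,\ldots,t_n,1,t_n^{-1},\ldots,t_1^{-1})$, the block $M_{\sigma}$ permutes the first $n$ entries by $\sigma$, while $M_{\sigma}^{at}$ (acting on the reversed list $(t_n^{-1},\ldots,t_1^{-1})$) permutes them compatibly, so the conjugation action reproduces the $S_n$-action on $T_{SO}$.

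For $\epsilon=(\epsilon_i)\in D$, the underlying permutation $\pi_\epsilon$ swaps the pair $\{i,m+1-i\}$ precisely when $\epsilon_i=-1$. Such $\pi_\epsilon$ commutes with the involution $i\mapsto m+1-i$, which is the necessary and sufficient condition for the corresponding permutation matrix to preserve $J$. The raw determinant is $(-1)^k$ with $k=|\{i:\epsilon_i=-1\}|$; for $m=2n$ the definition of $D$ forces $k$ even, and for $m=2n+1$ the prescribed sign $(-1)^k$ at the $(n+1,n+1)$ entry exactly cancels the permutation sign, giving determinant $1$. Conjugating a diagonal element $t$ by $s(\epsilon)$ swaps $t_i$ with $t_i^{-1}$ whenever $\epsilon_i=-1$, which is the desired $D$-action.

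Multiplicativity on $S_n$ follows from block multiplication together with the identity $(AB)^{at}=A^{at}B^{at}$ (immediate from $X^{at}=J_n X J_n$); multiplicativity on $D$ follows because the column-swap operations at disjoint index pairs commute and are involutions, and the middle-entry signs in the odd case multiply as prescribed. The subtlest step, and the main bookkeeping hurdle, is the cross-relation: conjugating $s(\epsilon)$ by $s(\sigma)$ transports each column-pair swap at $(i,m+1-i)$ to a swap at $(\sigma(i),m+1-\sigma(i))$, and one checks that the sign-fixing middle entry is preserved in the odd case; the result is $s(\epsilon')$ where $\epsilon'_{\sigma(i)}=\epsilon_i$, which is exactly $\sigma\cdot\epsilon$. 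Together these confirm $s$ is a section and, since $W=D\rtimes S_n$, a group homomorphism.
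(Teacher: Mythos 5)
The paper gives no proof of this lemma---it merely states the formulas and leaves the verification implicit---so your write-up supplies genuine content. Your overall strategy (membership in $\SO{m}$, recovery of the prescribed Weyl action by conjugation, multiplicativity on $S_n$ and $D$ separately, and the semidirect-product cross-relation) is exactly right, and the pieces for $D$, for the determinant bookkeeping, for the action on $T_{SO}$, and for $s(\sigma)d_\epsilon s(\sigma)^{-1}=d_{\sigma\cdot\epsilon}$ all check out.

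The one genuine slip is the identity $M_\sigma^{at}=J_nM_\sigma J_n$, and the consequent claim $(AB)^{at}=A^{at}B^{at}$. With the standard meaning of \emph{anti-diagonal transpose} (reflection across the anti-diagonal, i.e.\ $X^{at}=J_nX^{t}J_n$, so $(X^{at})_{ij}=X_{n+1-j,\,n+1-i}$), both statements are false: $J_nXJ_n$ is the $180^\circ$ rotation $X_{n+1-i,\,n+1-j}$, and the anti-transpose is an \emph{anti}-automorphism, $(AB)^{at}=B^{at}A^{at}$. Expanding $s(\sigma)Js(\sigma)^t$ as you do forces the lower-right block to be $J_nM_\sigma J_n$, which for a permutation matrix equals $(M_\sigma^{-1})^{at}=(M_\sigma^{at})^{-1}$, not $M_\sigma^{at}$; for instance with $n=3$ and $\sigma=(1\,2\,3)$ one finds $M_\sigma^{at}=M_\sigma$ while $J_3M_\sigma J_3=M_{\sigma^{-1}}$, and only the latter choice makes $s(\sigma)$ preserve $J$. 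Homomorphy fails for the same reason if one insists on the literal anti-transpose, since $(M_\tau M_\sigma)^{at}\neq M_{\sigma\tau}^{at}$ in general; it holds for $J_nM_\sigma J_n$ precisely because $X\mapsto J_nXJ_n$ is a ring automorphism. So the matrix you actually need is $J_nM_\sigma J_n$---your computation silently lands on it---but the label ``anti-diagonal transpose'' attached to the intermediate identity is misapplied, and this is arguably a small defect of the lemma's statement as well. Once the block is written as $J_nM_\sigma J_n$ (equivalently, the anti-transpose of $M_\sigma^{-1}$), the remainder of your argument is correct as given.
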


By the lemma above, we may identify $D$ with its isomorphic image in $N(T_{SO})$. With this identification, let $\tilde D=\pi^{-1}D \subset \Spin{m}$. We would like to calculate the structure of $\tilde D$. Suppose $m=2n$ is even, for each $i \in [1,n]$, let $w_i=\frac{1}{\sqrt{2}}(e_i-f_i)$. A straightforward calculation shows that $Q(w_i)=-1$ and $r_{w_i} \in \mr{O}_m$ equals the matrix obtained by swapping the $i$-th and the $(m+1-i)$-th columns of $I_m$. It follows that $D$ is generated by $r_{w_i}r_{w_j}$ for $1 \leq i < j \leq n$, and hence $\tilde D$ is generated by $w_iw_j$ for $1 \leq i < j \leq n$. 
Note that $w_i^2=-1$ and $w_iw_j=-w_jw_i$ for $i \neq j$, which imply that 
$$(w_1\cdots w_k)^2=(-1)^{k(k+1)/2}$$
for $1 \leq k \leq n$. Now suppose $m=2n+1$ is odd, define $w_i$ as above and let $w_0:=\sqrt{-1}u_0$, then $r_{w_0}r_{w_i} \in \SO{m}$ equals the matrix obtained from $I_m$ by swapping the $i$-th and the $(m+1-i)$-th columns and replacing the $(n+1,n+1)$--entry with $-1$. It follows that $r_{w_0}r_{w_i} $, $1\leq i \leq n$ generate $D$ and hence $\omega_i:=w_0w_i$, $1\leq i \leq n$ generate $\tilde D$. Note that $\omega_i^2=-1$ and $\omega_i\omega_j=-\omega_j\omega_i$ for $i \neq j$, which imply that 
$$(\omega_1\cdots\omega_k)^2=(-1)^{k(k+1)/2}$$
for $1 \leq k \leq n$. We thus obtain

\begin{lem}\label{NT sequence for Spin}
The sequence $1 \to \{ \pm 1\} \to \tilde D \to D \to 1 $ is nonsplit. Moreover, if $m=2n$, $w^0$ acts as $-1$ on $X^{\bullet}(T)$ if and only if $n$ is even, in which case it corresponds to the element $w_1\cdots w_n \in \Spin{m}$ under the map $N(T) \rsurj W$. This element has order two if and only if $n \equiv 0 \mod 4$. If $m=2n+1$, $w^0$ always acts as $-1$ on $X^{\bullet}(T)$, and the corresponding element $\omega_1\cdots\omega_n \in \Spin{m}$ has order two if and only if $n \equiv 0,3 \mod 4$.
\end{lem}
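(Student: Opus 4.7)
The proof is a direct computation that rests on the anti-commutation relations $w_iw_j = -w_jw_i$ ($i \ne j$) and $w_i^2 = -1$ (and their analogues for the $\omega_i$) already established, together with the closed-form identities $(w_1\cdots w_k)^2 = (-1)^{k(k+1)/2}$ and $(\omega_1\cdots\omega_k)^2 = (-1)^{k(k+1)/2}$.

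For nonsplitness of $1 \to \{\pm 1\} \to \tilde D \to D \to 1$, I would argue by contradiction: a splitting would lift every involution of $D$ to an involution of $\tilde D$. In the even case $m = 2n$, the element $r_{w_1}r_{w_2} \in D$ manifestly has order two (it swaps two disjoint pairs of columns), yet its two preimages $\pm w_1 w_2 \in \Spin{m}$ satisfy $(w_1 w_2)^2 = -w_1^2 w_2^2 = -1$ and so both have order four. In the odd case $m = 2n+1$ the same obstruction is provided by $\omega_1$: its image $r_{w_0}r_{w_1} \in D$ is an involution, but $\omega_1^2 = -1$.

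For the identification of $w^0$ and its order when $m = 2n$, I would first note that the longest element is $(-1,\dots,-1)$, which lies in $D$ (an even number of minus signs) iff $n$ is even. The section of Lemma \ref{NT sequence for SO} sends it to $d_{(-1,\dots,-1)}$, the matrix swapping all pairs $(i, m+1-i)$; since the reflections $r_{w_i}$ act on disjoint coordinates they commute pairwise, so this matrix equals $r_{w_1}\cdots r_{w_n}$, whose preimages in $\Spin{m}$ are $\pm w_1\cdots w_n$. Writing $n = 2j$, the exponent $n(n+1)/2 = j(2j+1)$ has the parity of $j$, so $(w_1\cdots w_n)^2 = 1$ iff $j$ is even, i.e. $n \equiv 0 \pmod 4$.

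The odd case $m = 2n+1$ is parallel. Here $w^0 = (-1,\dots,-1) \in D = \{\pm 1\}^n$ always, and $s(w^0)$ swaps all pairs $(i, m+1-i)$ while multiplying the middle entry by $(-1)^n$. Using $r_{w_0}^2 = I$ and that the various $r_{w_i}$ commute, one computes $\pi(\omega_1\cdots\omega_n) = r_{w_0}^n \cdot r_{w_1}\cdots r_{w_n}$, which agrees with $s(w^0)$ as a matrix; the only care required is to track the middle-entry factor $(-1)^n$ correctly, which is the sole potential source of confusion in the argument. The identity $(\omega_1\cdots\omega_n)^2 = (-1)^{n(n+1)/2}$ then shows that $\omega_1\cdots\omega_n$ has order two precisely when $n(n+1)/2$ is even, i.e.\ $n \equiv 0$ or $3 \pmod 4$.
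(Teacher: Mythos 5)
Your proof is correct and follows essentially the same approach as the paper, which derives the lemma directly from the identities $(w_1\cdots w_k)^2 = (\omega_1\cdots\omega_k)^2 = (-1)^{k(k+1)/2}$ established in the preceding paragraph; in particular your nonsplitness argument (generators of $D$ have order $2$ but their preimages in $\tilde D$ square to $-1$) and your tracking of the middle-entry factor $(-1)^n$ in the odd case fill in exactly the computations the paper leaves implicit. One small imprecision: for $m=2n$ with $n$ odd the longest element is not $(-1,\dots,-1)$ (which is not even in $W(D_n)$), so it would be cleaner to phrase this as "$-1 \in W$ iff $n$ is even, in which case $w^0 = (-1,\dots,-1) \in D$"; this does not affect the rest of the argument.
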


For the rest of this section, we make the following \emph{parity assumption} on the integer $m$: 
\begin{itemize}
    \item If $m=2n$ is even, then $n \equiv 0 \mod 4$;
    \item If $m=2n+1$ is odd, then $n \geq 3$ and $n \equiv 0,3 \mod 4$; 
\end{itemize}
equivalently, $m \geq 7$ and $m\equiv 0,1,7 \mod 8$.

By Lemma \ref{NT sequence for Spin}, $w^0 \in W$ then lifts to an order two element in $N(T)$; and it follows from Lemma \ref{spin rep} that the spin representation is $$\mr{spin}: \Spin{m} \to \SO{N}$$ where $N=2^{n}$ for odd $m$ and $N=2^{n-1}$ for even $m$. 

We now begin to construct the mod $p$ representation valued in $\Spin{m}$. We do this by first realizing $\tilde D$ as a Galois group of some finite extension of $\rats$, and then modify the corresponding homomorphism $\galQ \to N(T)_{\Fp} \subset \Spin{m}(\Fp)$ so that after composing with $\mr{spin}$, the resulting representation satisfies the assumptions of the potential automorphy theorems of \cite{blggt}.

\begin{lem}
There is a finite, totally real Galois extension $L/\rats$ whose Galois group is isomorphic to $D$ satisfying property $(S_N)$ with $N=2$ in the sense of \cite[Definition 2.1.2]{ser:igp}, i.e. every prime $p$ which is ramified in $L/\rats$ satisfies
\begin{itemize}
    \item $p \equiv 1 \mod 4$.
    \item If $v$ is a place of $L$ above $p$, then the local extension $L_v/\Qp$ is totally ramified.
\end{itemize}
\end{lem}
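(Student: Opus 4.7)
Under the parity assumption on $m$, the group $D$ is elementary abelian of exponent $2$: it is $(\ints/2\ints)^n$ when $m=2n+1$, and the weight-even index-two subgroup of $(\ints/2\ints)^n$, hence $\cong(\ints/2\ints)^{n-1}$, when $m=2n$. Write $k$ for its rank. I plan to realize $D$ explicitly as the Galois group of a real multiquadratic extension $L=\rats(\sqrt{p_1},\dots,\sqrt{p_k})$ for carefully chosen distinct primes $p_i$ with $p_i\equiv 1\pmod 4$. Such an $L$ is automatically totally real (each $p_i>0$), unramified outside $\{p_1,\dots,p_k\}$, and of Galois group $(\ints/2\ints)^k\cong D$ since the $\sqrt{p_i}$ are independent in $\rats^\times/(\rats^\times)^2$.

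The delicate point is the totally-ramified local condition. For a place $v\mid p_i$ of $L$, the completion $L_v$ is the compositum of the $\rats_{p_i}(\sqrt{p_j})$. Since $p_i$ is odd and $p_j\in\ints_{p_i}^\times$ for $j\neq i$, each $\rats_{p_i}(\sqrt{p_j})$ with $j\neq i$ is either $\rats_{p_i}$ itself or the unramified quadratic extension of $\rats_{p_i}$, according to whether $p_j$ is or is not a square modulo $p_i$ (Hensel). Thus $L_v/\rats_{p_i}$ is totally ramified if and only if $\left(\frac{p_j}{p_i}\right)=1$ for every $j\neq i$, in which case $L_v=\rats_{p_i}(\sqrt{p_i})$, a totally ramified quadratic extension of $\rats_{p_i}$.

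I would produce the $p_i$ inductively using Dirichlet's theorem on primes in arithmetic progressions. Having chosen $p_1,\dots,p_{s-1}$, I pick $p_s$ satisfying $p_s\equiv 1\pmod 4$ together with $p_s\equiv a_j^2\pmod{p_j}$ for $j<s$, for arbitrary fixed nonzero residues $a_j$. By CRT these cut out a single residue class modulo $4\prod_{j<s}p_j$, which by Dirichlet contains infinitely many primes. This directly gives $\left(\frac{p_s}{p_j}\right)=1$ for $j<s$; the symmetric relation $\left(\frac{p_j}{p_s}\right)=1$ then follows by quadratic reciprocity because every $p_i$ is $\equiv 1\pmod 4$. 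Iterating, the symmetric condition holds for every pair $i\neq j$, so the analysis of the previous paragraph applies at each ramified prime.

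No step is really an obstacle; the one cute point is that $p_i\equiv 1\pmod 4$ does double duty — it guarantees total real-ness of $L$ and simultaneously makes quadratic reciprocity sign-free, which is what promotes the one-sided Dirichlet choice into the two-sided condition $\left(\frac{p_j}{p_i}\right)=1$ for all $i\neq j$ needed for the local extensions at the $p_i$ to be totally ramified.
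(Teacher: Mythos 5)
Your proof is correct and takes essentially the same approach as the paper: both realize $D$ as $\gal{L}{\rats}$ with $L=\rats(\sqrt{p_1},\dots,\sqrt{p_k})$ for an inductively constructed set of primes $p_i\equiv 1\pmod 4$ that are mutually quadratic residues, with quadratic reciprocity (sign-free since $p_i\equiv 1\pmod 4$) promoting the one-sided construction to the symmetric condition needed for the local extensions at each $p_i$ to be totally ramified. The only cosmetic difference is that you produce each new prime via CRT plus Dirichlet, whereas the paper imposes the equivalent splitting condition in $\rats(\sqrt{-1},\sqrt{p_1},\dots,\sqrt{p_k})$ via Chebotarev.
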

\begin{proof}
We have $D \cong (\Zmod{2})^n$ or $(\Zmod{2})^{n-1}$. We pick a prime $p_1 \equiv 1 \mod 4$. We will construct inductively $p_1, ... ,p_n$ such that $p_i \equiv 1 \mod 4$ and $p_i \in (\rats_{p_j}^{\times})^2$ for $i \neq j$. Suppose we have $p_1, ... ,p_k$ ($k<n$) already and consider $\rats(\sqrt{-1},\sqrt{p_1}, ... ,\sqrt{p_k})$. Chebotarev implies that there are infinitely many $p$ that splits in this field. In particular, $-1, p_1, ... ,p_k \in (\Qp^{\times})^2$, which is equivalent to 
    $$\legndr{-1}{p}=\legndr{p_1}{p}=\cdots=\legndr{p_k}{p}=1.$$
    In particular, $p \equiv 1 \mod 4$ and quadratic reciprocity implies that $$\legndr{p}{p_1}=\cdots=\legndr{p}{p_k}=1,$$ which is equivalent to 
    $p \in (\rats_{p_j}^{\times})^2$ for $1 \leq j \leq k$. Thus, if we take $p_{k+1}$ to be such a prime, the induction can proceed. Now we let $L=\rats(\sqrt{p_1}, ... ,\sqrt{p_n})$. Then $\gal{L}{\rats}=D$ and $L_{v_i}=\rats_{p_i}(\sqrt{p_i})$, which is a totally ramified extension of $\rats_{p_i}$.
\end{proof}

Let $\underline s: \galQ \rsurj \gal{L}{\rats}=D$ be the corresponding homomorphism produced by the above lemma. We need to lift $\underline s$ to a surjective homomorphism $s: \galQ \to \tilde D$ across the short exact sequence $1 \to \mu_2 \to \tilde D \to D \to 1$. The method we use is a minor modification of the argument in page 12 of \cite{ser:igp}.  
Since the exact sequence of groups is nonsplit by Lemma \ref{NT sequence for Spin}, it suffices to find a lift $s: \galQ \to \tilde D$ of $\underline s$, which will automatically be surjective. Suppose that $\underline s$ has a lift to $\tilde D$ everywhere locally (including $\infty$), then $\underline s$ has a lift to $\tilde D$. In fact, $\underline s$ induces a map $s^*: \coh{2}{D}{\mu_2} \to \coh{2}{\galQ}{\mu_2}$, and $\underline s$ lifts to $\tilde D$ if and only if $\underline{s}^*(\xi)=0$, where $\xi$ is the class of the extension $\tilde D \rsurj D$. The natural map 
$$\coh{2}{\galQ}{\mu_2} \to \prod_{p \leq \infty} \coh{2}{\Gamma_{\Qp}}{\mu_2}$$ is injective because $\coh{2}{\galQ}{\mu_2}=\mr{Br}(\rats)[2]$ and $\mr{Br}(\rats) \xhookrightarrow{} \prod_{p \leq \infty} \mr{Br}(\Qp)$. It follows that if $\underline s$ lifts everywhere locally, then it lifts globally. It thus remains to show that $\underline s$ lifts everywhere locally. For $p<\infty$, $\underline s|_{\Gamma_{\Qp}}$ lifts to $\tilde D$ follows from property $(S_2)$ by the exact same argument as in the last two paragraphs of page 12 of \cite{ser:igp}; for $p=\infty$, $\underline s|_{\Gamma_{\reals}}$ obviously lifts to $\tilde D$ because $\underline s(c)=1$ ($c$ is the complex conjugation). Therefore, we obtain a surjection
\[s: \galQ \rsurj \tilde D \subset N(T)_{\ints}.\]

We would like to modify $s$ so that $s(c)$ belongs to $T$ and is `sufficiently odd' in the sense that $\dim (\mf{so}_{m})^{\mr{Ad}s(c)}=|\Phi|/2$, where $\Phi$ is the root system of $\mf{so}_m$. When $s(c) \in \Spin{m}$ satifies this equality, we call it a \emph{split Cartan involution}, or a \emph{Chevalley involution} in $\Spin{m}$. 

Let us fix a Borel subgroup $B \supset T$ of $\Spin{m}$. It corresponds to a set of simple roots $\Delta \subset \Phi:=\Phi(\Spin{m},T)$. 
Let $\rho^{\vee}$ be the half sum of positive coroots in $\Phi^{\vee}$. By examining the Plates in \cite{bou:lie}, if $\Phi$ is of type $B_n$, $\rho^{\vee}$ has coefficients in $\ints$ if and only if $n \equiv 0,3 \mod 4$; and if $\Phi$ is of type $D_n$, $\rho^{\vee}$ has coefficients in $\ints$ if and only if $n \equiv 0,1 \mod 4$. In particular, $\rho^{\vee}$ has coefficients in $\ints$ for $m=2n$ or $2n+1$ satisfying the parity assumption we made before. It follows that for those $m$, $\rho^{\vee}\in X_{\bullet}(T)$, i.e. it is a well-defined cocharacter of $T$.

Let $k$ be a field and $G$ be an algebraic group defined over $k$, we denote by $G_k$ or $G(k)$ the $k$--points of $G$. Observe that if $s: \galQ \to N(T)_k$ is a group homomorphism and $\phi \in \coh{1}{\galQ}{T_k}$, where $\galQ$ acts on $T_k$ via the composite $\mr{Ad} \circ s$, then $\phi\cdot s: \galQ \to N(T)_k$ is a group homomorphism.

Recall that we have an isomorphism $\mb G_m^n \xrightarrow{\sim} T_{SO}$, $(t_1, ... ,t_n) \mapsto diag(t_1, ... ,t_n,1,t_n^{-1}, ... ,t_1^{-1})$ (where we omit the middle 1 when $m$ is even). We also have an isomorphism 
$$T \xrightarrow{\sim} \{ (z,t_1, ... ,t_n) \in \mb G_m^{n+1}: z^2=t_1\cdots t_n \}.$$
Let $T[2]$ be the group of $\ints$--points of $T$. It consists of elements of the form
$(z,\epsilon_1, ... ,\epsilon_n)$, where $z=\pm 1$, $\epsilon_i=\pm 1$ and the number of $-1$ among $\epsilon_i$ is even.
The group $\galQ$ acts on $T[2]$ via $\mr{Ad} \circ s$. This action factors through $\gal{L}{\rats} \cong D$. Then $T[2]$ is a direct sum of two irreducible $D$--modules, $V_1=\{(1,\epsilon_1, ... ,\epsilon_n): \epsilon_i=\pm 1\}$ and $V_2=\{(-1,\epsilon_1, ... ,\epsilon_n): \epsilon_i=\pm 1\}$. Letting $S=\mr{cs}(L/\rats) \cup \{\infty, 2\}$ ($\mr{cs}(L/\rats)$ denotes the set of places of $\rats$ that split completely in $L$), \cite[Theorem 9.2.3,(v)]{nsw:coh} implies that for $i=1,2$,
the natural maps
$$\coh{1}{\Gamma_{\rats,S}}{V_i} \to \coh{1}{\Gamma_{\reals}}{V_i}$$ are surjective, and hence we have a surjection 
$$\coh{1}{\Gamma_{\rats,S}}{T[2]} \rsurj \coh{1}{\Gamma_{\reals}}{T[2]}=\mr{Hom}(\Gamma_{\reals},T[2])$$
(recall that $s(c) \in \mu_2$).
Let $\phi \in \coh{1}{\Gamma_{\rats,S}}{T[2]}$ be a class that maps to $c \mapsto s(c)\rho^{\vee}(-1)$ in $\mr{Hom}(\Gamma_{\reals},T[2])$. We now replace $s$ by $\phi \cdot s$. Then \emph{having made this replacement}, $s(c)=\rho^{\vee}(-1)$, which is a split Cartan involution by \cite[Lemma 2.3]{yun:mot}. Let $E$ be the fixed field of $\ker s$.

Now we are going to further modify $s$ by an element in $\coh{1}{\galQ}{T_{\Fp}}$ (for a prime $p$) so that the resulting homomorphism $\galQ \to N(T)_{\Fp} \subset \spin{m}{\Fp}$ satisfies favorable conditions so that we can use the results of \cite{blggt} to deform it to a geometric representation. The method we use here is extremely close to the proof of \cite[Proposition 2.8]{bce+}. We first establish some notation. For a fixed prime $p$, we write $\kappa$ for the $p$-adic cyclotomic character and $\bkp$ for its mod $p$ reduction. For a prime $l$ dividing $p-1$ such that $l^2$ does not divide $p-1$, let $\mr{pr}(l)$ be the canonical projection from $\Fp^{\times}$ onto the $l$--torsion subgroup $\Fp^{\times}[l]$. Let $\bkp[l]=\mr{pr}(l)\circ \bkp: \galQ \to \Fp^{\times}[l]$.

\begin{prop}\label{mod p rep}
Consider pairs of primes $(l,p)$ such that $p$ splits in $E/\rats$ and $p-1$ is divisible by $l$ but not $l^2$. Then there exist infinitely many primes $l$ such that there exist infinitely many
pairs $(l,p)$ such that there exists a homomorphism
$$\bar r: \galQ \to T_{\Fp}[l]\cdot \tilde D \subset N(T)_{\Fp}$$
safisfying 
\begin{enumerate}
    \item $\bar r(c)=\rho^{\vee}(-1)$.
    \item At $p$, fix any choice of integers $n_{\alpha}$ for $\alpha \in \Delta$, $\bar r|_{p}=\prod_{\alpha \in \Delta} \alpha^{\vee} \circ \bkp[l]^{n_{\alpha}}$.
    \item There is a prime $q$ of order $l$ mod $p$ such that $\bar r|_q$ is unramified with Frobenius mapping to $\rho^{\vee}(q) \in T_{\Fp}[l]$.
    
    Moreover, in addition to the above, we can choose $p>2(N+1)$, $l \geq h_{\Spin{m}}$ ($h_{\Spin{m}}$ is the Coxeter number of $\Spin{m}$), $\{n_{\alpha}\}$ and $\bar r$ such that
    \item $\mr{spin} \circ \bar r|_p$ is a direct sum of distinct powers of $\bkp[l]$.
    \item For any Borel $B \supset T$, $\coh{0}{\Gamma_{\Qp}}{\bar r(\mf g/\mf b)}$ vanishes (where $\mf g=\mf{so}_m$ and $\bar r$ acts on $\mf g/\mf b$ via the adjoint action).
    \item $\mr{spin} \circ \bar r|_{\Gamma_{\rats(\mu_p)}}$ is absolutely irreducible.
\end{enumerate}
\end{prop}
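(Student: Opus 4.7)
The plan is to follow the template of \cite[Proposition 2.8]{bce+}, with Lemma \ref{transitivity} (simple transitivity of $D$ on the weights of $\mr{spin}$) and the explicit relations on $\tilde D$ established just above as the structural inputs specific to the spin case. We will modify $s$ by a cohomology class $\phi \in \coh{1}{\Gamma_{\rats,S}}{T_{\Fp}[l]}$, with $\galQ$ acting on $T[l]$ via $\mr{Ad}\circ s$ and $S$ a suitable finite set of primes, to produce $\bar r := \phi \cdot s$ realizing the prescribed local behavior at $\infty$, $p$, and $q$.

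\textbf{Step 1: prime selection.} First fix $l \geq h_{\Spin{m}}$ coprime to $2|D|$, taken large enough to accommodate the genericity conditions below. Applying Chebotarev to $E(\mu_l)/\rats$ yields infinitely many primes $p > 2(N+1)$ that split completely in $E$ and satisfy $l \mid p-1$; the density-zero subset with $l^2 \mid p-1$ is discarded. Because $E/\rats$ is unramified at $p$ while $\rats(\mu_p)/\rats$ is totally ramified at $p$, the fields $E$ and $\rats(\mu_p)$ are linearly disjoint over $\rats$, so $s|_{\Gamma_{\rats(\mu_p)}}$ still surjects onto $\tilde D$; moreover $s|_{\Gamma_{\Qp}}$ is trivial since $p$ splits in $E$. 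One then chooses by Chebotarev an auxiliary prime $q$ of order $l$ mod $p$ that is totally split in $E$, so that $s(\frob q)=1$.

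\textbf{Step 2: constructing $\phi$.} The required local conditions are $\phi|_\infty = 1$ (forcing $\bar r(c) = s(c) = \rho^\vee(-1)$), $\phi|_p = \prod_{\alpha\in\Delta}\alpha^{\vee}\circ \bkp[l]^{n_\alpha}$ (using triviality of $s|_p$), and $\phi$ unramified at $q$ with $\phi(\frob q) = \rho^\vee(q)$. Since $\galQ$ acts on $T[l]$ through $D$, we decompose $T[l]$ into irreducible $D$-submodules and, exactly as in the construction of $s$, invoke \cite[Theorem 9.2.3(v)]{nsw:coh} together with Poitou--Tate duality, enlarging $S$ by Chebotarev-selected auxiliary primes to kill any residual Selmer obstruction. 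This argument is parallel to \cite[Proposition 2.8]{bce+} and yields $\phi$, hence $\bar r$, realizing (1), (2), (3).

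\textbf{Step 3 and the main obstacle.} Conditions (4) and (5) amount to generic hyperplane-avoidance conditions on $(n_\alpha) \in \Fl^{|\Delta|}$: (4) requires $\sum_\alpha n_\alpha \langle \alpha^\vee, \mu_\epsilon - \mu_{\epsilon'}\rangle \not\equiv 0 \pmod l$ for each pair of distinct weights of $\mr{spin}$, and (5) requires $\sum_\alpha n_\alpha \langle \alpha^\vee, \beta\rangle \not\equiv 0 \pmod l$ for each negative root $\beta$; both are achievable since $l \geq h_{\Spin{m}}$ bounds the relevant pairings. The main obstacle is (6): restriction to $\Gamma_{\rats(\mu_p)}$ trivializes $\bkp[l]$ but the image still surjects onto $\tilde D$ by Step 1. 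To force irreducibility we arrange $\phi$ to carry enough $T[l]$-valued content from the auxiliary primes in $S$ so that its restriction to $\Gamma_{\rats(\mu_p)}$ separates the weight characters of $\mr{spin}$; any invariant subspace is then $T$-stable, hence a sum of weight spaces, and by Lemma \ref{transitivity} this sum is either trivial or the whole representation. The delicate portion of the whole argument is therefore the global cohomology computation of Step 2, which must simultaneously realize all the local constraints at $\infty$, $p$, $q$, and the auxiliary primes needed for (6).
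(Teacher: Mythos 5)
Your proposal follows the same template as the paper's proof: modify $s$ by a cohomology class $\phi \in \coh{1}{\Gamma_{\rats,\Sigma}}{T_{\Fp}[l]}$ realized via the global-to-local surjectivity of \cite[Theorem 9.2.3(v)]{nsw:coh}, impose the required local shapes at $p$ and $q$, treat (4), (5) as generic non-vanishing conditions on $\{n_\alpha\}$, and use Lemma \ref{transitivity} to pass from ``sum of weight spaces'' to irreducibility. The overall structure is correct, and you have correctly isolated (6) as the step requiring an additional input.

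The one place where your argument stays at the level of a slogan is exactly that step. You write ``arrange $\phi$ to carry enough $T[l]$-valued content from the auxiliary primes in $S$ so that its restriction to $\Gamma_{\rats(\mu_p)}$ separates the weight characters,'' but you never say what local condition to impose to achieve this. The paper's construction is concrete: fix an auxiliary prime $r$ chosen by Chebotarev to split completely in $E(\mu_p)$ (so $\frob r$ lies in $\Gamma_{\rats(\mu_p)}$ and $s(\frob r)=1$), include $r$ in the finite set where one prescribes $\phi$, and require $\phi|_r$ unramified with $\phi(\frob r)=t$ for some $t \in T_{\Fp}[l]$ on which the weights $\Lambda_{spin}$ take pairwise distinct values (possible once $l$ is large relative to $\Spin{m}$). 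Then $\bar r(\frob r)=t$ forces any $\Gamma_{\rats(\mu_p)}$-stable subspace to be a sum of weight spaces, and the $\tilde D$-transitivity from Lemma \ref{transitivity} (available because $E$ and $\rats(\mu_p)$ are linearly disjoint, so $\bar r|_{\Gamma_{\rats(\mu_p)}}$ still surjects onto $D$) finishes. Without naming $r$ and the regular element $t$, ``any invariant subspace is $T$-stable'' is unjustified — one only gets stability under the single element of $T_{\Fp}[l]$ imposed at $r$, which happens to suffice precisely because $t$ is regular on $\Lambda_{spin}$.

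Two smaller points. First, the subset of primes $p$ split in $E(\mu_l)$ with $l^2 \mid p-1$ is not of density zero inside that set (it has relative density $1/l$); the correct statement is that it has density strictly less than one, so one can still choose $p$ split in $E(\mu_l)$ but not in $E(\mu_{l^2})$. Second, there is no need to impose $\phi|_\infty = 1$ as a local condition: since $\rho^\vee(-1)\in T$ acts trivially on $T_{\Fp}[l]$ under $\mr{Ad}\circ s$, the cocycle relation at $c$ forces $2\phi(c)=0$, hence $\phi(c)=0$ automatically because $l$ is odd; this is what the paper uses to deduce (1).
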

\begin{proof}
Let $l$ be any odd prime such that $E$ (the fixed field of $\ker s$) and $\rats(\mu_{l^2})$ are linearly disjoint over $\rats$ (e.g. take $l$ split in $E/\rats$). Later in the argument we will require $l$ to be larger than some constant depending only on the group $\Spin{m}$. Now we take $p$ to be any prime split in $E(\mu_l)$ but nonsplit in $E(\mu_{l^2})$ (such prime exists by Chebotarev). In other words, $p$ splits in $E/\rats$ and $p-1$ is divisible by $l$ but not $l^2$. In particular, $E$ and $\rats(\mu_p)$ are linearly disjoint over $\rats$. By Chebotarev, there exists a prime $q$ that splits in $E$ and has order $l$ in $\Fp^{\times}$. Let $r$ be a prime that splits in $E(\mu_p)$. The Galois group $\galQ$ acts on $T_{\Fp}[l]$ via $\mr{Ad} \circ s$, which factors through $D \cong \gal{L}{\rats}$. Then $T_{\Fp}[l]$ decomposes into a direct sum of irreducible $\Fl[D]$--modules: $T_{\Fp}[l]=\bigoplus W_i$. Let $\Sigma=\mr{cs}(L/\rats) \cup \{\infty, l\}$ and $T=\{p,q,r\}$. Then \cite[Theorem 9.2.3,(v)]{nsw:coh} implies that the canonical homomorphisms
$$\coh{1}{\Gamma_{\rats,\Sigma}}{W_i} \to \bigoplus_{v\in T}\coh{1}{\Gamma_{\rats_v}}{W_i}$$ are surjective, from which it follows that
the canonical homomorphism 
$$\coh{1}{\Gamma_{\rats,\Sigma}}{T_{\Fp}[l]} \to \bigoplus_{v\in T}\coh{1}{\Gamma_{\rats_v}}{T_{\Fp}[l]}$$ is surjective as well.
We let $\phi \in \coh{1}{\Gamma_{\rats,\Sigma}}{T_{\Fp}[l]}$ be a class such that 
$\phi|_p=\prod_{\alpha \in \Delta} \alpha^{\vee} \circ \bkp[l]^{n_{\alpha}}$, $\phi|_q$ is unramified with $\phi(\frob{q})=\rho^{\vee}(q)$, and $\phi|_r$ is unramified with $\phi(\frob{r})=t$, where $t$ is any element in $T_{\Fp}[l]$ such that $\lambda(t)$ for $\lambda \in \Lambda_{spin}$ are all distinct (such $t$ exists provided that $l$ is sufficiently large). We set $\bar r=\phi \cdot s$. It follows immediately from the choice of $\phi$ that (2) and (3) hold. Since $s(c)=\rho^{\vee}(-1)$ and $l$ is odd, (1) holds. It remains to address (4) through (6). We have
$$\mr{spin} \circ \bar r|_p=\bigoplus_{\lambda \in \Lambda_{spin}} \bkp[l]^{\sum_{\alpha \in \Delta} n_{\alpha}<\lambda,\alpha^{\vee}> };$$
since $\Lambda_{spin}$ is multiplicity-free, choosing $\{n_{\alpha}\}$ to be sufficiently general will make the exponents distinct. This proves (4). For (5), note that
$$\bar r|_p(\mf g/\mf b)=\bigoplus_{\beta \in \Phi^-} \bkp[l]^{\sum_{\alpha \in \Delta} n_{\alpha}<\beta,\alpha^{\vee}> }.$$
By choosing $l$ to be sufficiently large and $\{n_{\alpha}\}$ to be sufficiently general, we can ensure that the absolute values of the exponents are in $(0,l)$; in particular, $\coh{0}{\Gamma_{\Qp}}{\bar r(\mf g/\mf b)}$ vanishes. Finally, $D$ acts transitively on $\Lambda_{spin}$ by Lemma \ref{transitivity}, so any nonzero submodule of $\mr{spin} \circ \bar r$ has nonzero projection to each of the weight spaces; since $E$ and $\rats(\mu_p)$ are linearly disjoint over $\rats$, the same is true for $\mr{spin} \circ \bar r|_{\Gamma_{\rats(\mu_p)}}$. On the other hand, $\bar r(\frob{r})=t \in T_{\Fp}[l]$ acts via distinct characters on the different weight spaces of the spin representation by the choice of $t$. It follows that $\mr{spin} \circ \bar r|_{\Gamma_{\rats(\mu_p)}}$ is absolutely irreducible, which shows (6).
\end{proof}

\section{Lifting Galois representations}\label{sec:lifting gal}
In this section, we prove a lifting theorem using a version of the Khare--Wintenberger argument (\cite{kw:serre}), similar to the arguments in \cite[\S 3]{bce+} and \cite[\S 3]{pt:gspin}. We axiomatize their arguments and state our lifting theorem as general as we can. On the other hand, the main theorem of \cite{fkp:reldef} allows one to deform a much larger class of $G$--valued mod $p$ Galois representations to geometric representations in characteristic zero, yet it does not establish potential automorphy of the lifts.

Let $G$ be a split connected semisimple group over $\rats$. Let $T$ be a maximal split torus of $G$ and let $\Phi=\Phi(G,T)$ be the root system of $G$. We assume that $G$ satisfies the followings:
\begin{itemize}
    \item $G$ contains a split Cartan involution, i.e. there is an element $\tau \in G$ of order two such that $\dim \mf g^{\mr{Ad} \tau}=|\Phi|/2$.
    \item $G$ admits an irreducible faithful representation $R: G \to \GL{N}$ factoring through $\SO{N}$ (the orthogonal group preserving the pairing $(x,y)=x_1y_1+x_2y_{2}+\cdots+x_Ny_N$ on $\rats^N$; this is different from the pairing used in \S \ref{sec:notation}) such that the formal character of $R$ is multiplicity-free. \footnote{This is a very restrictive assumption.}
\end{itemize}

Let $F^+$ be a totally real field. Let $\bar r: \Gamma_{F^+} \to G(\bFp)$ be a continuous representation. Let $S$ be the union of all the archimedean places of $F^+$, the places of $F^+$ dividing $p$ and the places of $F^+$ at which $\bar r$ is ramified. Suppose that $\bar r$ satisfies the followings:
\begin{itemize}
    \item $\bar r$ is odd in the sense that for $v|\infty$ with $c_v \in \Gamma_{F^+_v}$ the complex conjugation, $\bar r(c_v)$ is a split Cartan involution.
    \item For $v$ not above $p$, $\bar r|_v$ has a characteristic zero lift $r_v: \Gamma_{F^+_v} \to G(\bZp)$.
    \item For $v|p$, $\bar r|_v$ has a characteristic zero lift $r_v: \Gamma_{F^+_v} \to G(\bZp)$ with some fixed Hodge type $\un{\mu}(r_v)$ and inertial type $\sigma_v$ such that $R \circ r_v$ is potentially diagonalizable in the sense of \cite{blggt} with regular Hodge--Tate weights. \footnote{For the regularity to hold, it is necessary that the formal character of $R$ is multiplicity-free.}
    \item $R \circ \bar r|_{F^+(\mu_p)}$ is absolutely irreducible.
    \item $p$ is sufficiently large relative to $G$.
\end{itemize}

We let $\mc O$ be the ring of integers in a finite extension $E$ of $\Qp$, enlarged if necessary so that all of the above data are defined over $\mc O$. 

We recall the definition of the Clozel--Harris--Taylor group scheme $\mc G_n$ over $\ints$ which is defined as the semidirect product $(\GL{n} \times \GL{1}) \rtimes \{1,\jmath\}$ where $\jmath(g,a)\jmath=(a({}^t g)^{-1},a)$, and the similitude character $\nu: \mc G_n \to \GL{1}$ given by $\nu(g,a)=a$ and $\nu(\jmath)=-1$. Suppose we have a homomorphism $r \colon \Gamma_{F^+} \to G(A)$ for some ring $A$. 
Let $F/F^+$ be a quadratic extension of $F$, and define $\rho(r) \colon \Gamma_{F^+} \to \mc{G}_{N}(R)$ as the composite
\[
\Gamma_{F+} \xrightarrow{R(r) \times \mr{res}_{F}} \mr{SO}_{N}(A) \times \mr{Gal}(F/F^+) \to \mc{G}_{N}(A),
\]
where the last map sends $g\in \mr{SO}_{N}(A)$ to its image in $\gl{N}{A}$ and sends the nontrivial element of $\gal{F}{F^+}$ to $\jmath$. 
By our choice of the pairing defining $\SO{N}$, $\rho(r)$ is a well-defined homomorphism. 

We choose a quadratic CM extension $F/F^+$ such that $F$ does not contain $\zeta_p$, $R \circ \bar r|_{F(\mu_p)}$ remains irreducible, and all the finite places in $S$ split in $F/F^+$. 
We will define global deformation conditions for $\bar r$ and $\rho(\bar{r})$ with respect to $F/F^+$. 
Then we will show that there are natural finite maps between the corresponding deformation rings and use the $\mc O$--finiteness of the deformation ring for the group $\mc G_{N}$ 
to conclude that the deformation ring for the group $G$ is $\mc O$--finite; this, combined with a standard calculation of its Krull dimension (as in \cite{bg:Gdef}), will imply that it has a $\bQp$--point. For the $\mc G_N$--deformation problems, we will fix the multiplier: Let $\delta_{F/F^+}: \Gamma_{F^+} \to \{ \pm 1\}$ be the quadratic character associated with $F/F^+$; we require that any local or global deformation of $\rho(\bar r)$ composed with $\nu$ equals $\delta_{F/F^+}$. In particular, since all the finite places $v \in S$ split in $F/F^+$, $\rho(\bar r)|_v$ and all its local deformations are valued in the group $\GL{N}$. Thus we will omit $\delta_{F/F^+}$ from our notation when we discuss about local deformations.

Recall from the discussion in \S \ref{sec:notation} that for each $v\in S$ we can consider the lifting rings $R^{\Box}_{\bar r|_v}$ and 
$R^{\Box}_{\rho(\bar{r})|_v}$.
For $v \in S$ not above $p$, choose an irreducible component $\mc{C}(r_v)$, resp.  
$\mc{C}(\rho(r_v))$
of $R^{\Box}_{\bar{r}|_v} \otimes \bQp$, resp. 
$R^{\Box}_{\rho(\bar{r})|_v} \otimes \bQp$ containing $r_v$, resp.
$\rho(r_v)$.
such that under the natural map 
\[ 
\op{Spec} R^{\Box}_{\bar r|_v} \otimes \bQp \to 
\op{Spec} R^{\Box}_{\rho(\bar{r})|_v} \otimes \bQp,
\]
$\mc C(r_v)$ maps to
$\mc{C}(\rho(r_v))$. 

Similarly, for $v \vert p$, the fixed inertial type and fixed $p$-adic Hodge type of $r_v$ induces corresponding data for $\rho(r_v)$. Choose $\mc{C}(r_v)$ to be a potentially crystalline component mapping into a potentially crystalline component $\mc{C}(\rho(r_v))$ containing the potentially diagonalizable point $\rho(r_v)$ under the natural map
\[
\op{Spec} R^{\Box, \un{\mu}(r_v), \sigma_v}_{\bar r|_v}\otimes \bQp \to 
\op{Spec} R^{\Box, \un{\mu}(\rho(r_v)), \rho(\sigma_v)}_{\rho(\bar{r})|_v}\otimes \bQp.
\]

We now the global deformation rings, for $\bar{r}$ and $\rho(\bar{r})$, by considering lifts that locally lie on the irreducible components we have just specified. More precisely, following the formalism of \cite[\S 4.2]{bg:Gdef}, we let $R^{\mr{univ}}_{G}$ be the quotient of the universal, unramified outside $S$ deformation ring for $\bar{r}$ corresponding to the fixed set of components $\{\mc{C}(r_v)\}_{v \in S}$. We similarly define $R^{\mr{univ}}_{\mr{GL}}$ corresponding to the local components $\{\mc{C}(\rho(r_v))\}$ (and fixed polarization $\delta_{F/F^+}$). These rings all exist by absolute irreducibility of the respective residual representations, and by the discussion in \cite[\S 4.2]{bg:Gdef} (\cite[Lemma 3.4.1]{bg:Gdef} plays a key role here).
By construction, there is a natural $\mc O$--algebra map
\[
R^{\mr{univ}}_{\mr{GL}} \to R^{\mr{univ}}_{\mr{G}}.
\]

\begin{lem}\label{defcompare}
The map $R^{\mr{univ}}_{\mr{GL}} \to R^{\mr{univ}}_{G}$ is surjective.
\end{lem}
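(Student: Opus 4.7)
The strategy is the standard tangent-space criterion: since $R^{\mr{univ}}_{\mr{GL}}$ and $R^{\mr{univ}}_G$ are complete local Noetherian $\mc O$-algebras with common residue field $k$, the map is surjective (by Nakayama) if and only if the induced map on maximal ideals modulo their squares (and the image of $\mf m_{\mc O}$) is surjective, equivalently if and only if the corresponding tangent map $t_G \to t_{\mr{GL}}$ is injective. Using the formalism of \cite[\S 4.2]{bg:Gdef}, I would identify $t_G$ with the subspace of $\coh{1}{\Gamma_{F^+,S}}{\bar r(\mf g)}$ whose restriction at each $v\in S$ lies in $L_v^G \subseteq \coh{1}{\Gamma_{F^+_v}}{\bar r(\mf g)}$, the tangent space of $\mc C(r_v)$ at $r_v$, and similarly $t_{\mr{GL}} \subseteq \coh{1}{\Gamma_{F^+,S}}{\mr{ad}^0\,\rho(\bar r)}$ with local conditions $L_v^{\mr{GL}}$ coming from $\mc C(\rho(r_v))$. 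The map $t_G \to t_{\mr{GL}}$ is induced by the differential of $\rho$ on coefficients.

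The main step is to exhibit $\bar r(\mf g)$ as a $\Gamma_{F^+}$-equivariant direct summand of $\mr{ad}^0\,\rho(\bar r)$. Two ingredients enter here. First, $R$ factors through $\SO{N}$; a direct computation shows that conjugation by $\jmath$ on $\mf{gl}_N$, after modding out the multiplier, is the map $X \mapsto -{}^t X$, which restricts to the identity on $\mf{so}_N$. Consequently the $\Gamma_{F^+}$-action on the submodule $dR(\mf g) \subseteq \mf{so}_N$ via $\rho(\bar r)$ coincides with the action via $\mr{Ad} \circ R \circ \bar r$, so $dR$ genuinely embeds the Galois module $\bar r(\mf g)$ into $\mr{ad}^0\,\rho(\bar r)$. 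Second, under the hypothesis that $p$ is sufficiently large relative to $G$, the finite-dimensional rational $G$-representations appearing here are completely reducible, so $dR(\mf g)$ admits a $G$-equivariant complement $W$ inside $\mr{ad}^0\,\rho(\bar r)$; this complement is automatically $\Gamma_{F^+}$-equivariant via $\bar r$.

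The resulting decomposition $\mr{ad}^0\,\rho(\bar r) = \bar r(\mf g) \oplus W$ produces an injection $\coh{1}{\Gamma_{F^+,S}}{\bar r(\mf g)} \hookrightarrow \coh{1}{\Gamma_{F^+,S}}{\mr{ad}^0\,\rho(\bar r)}$. Since the components were chosen so that $\mc C(r_v)$ maps into $\mc C(\rho(r_v))$, the corresponding local tangent spaces satisfy $L_v^G$ mapping into $L_v^{\mr{GL}}$ under restriction, so $t_G \to t_{\mr{GL}}$ is the restriction of an injection and is itself injective; the tangent-space criterion then yields the surjectivity of $R^{\mr{univ}}_{\mr{GL}} \to R^{\mr{univ}}_G$. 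The principal obstacle is securing the direct summand property: without $p$ sufficiently large there is no general reason for $dR(\mf g) \hookrightarrow \mf{so}_N$ to split as $G$-modules, and the argument would fail. The observation that $\jmath$ acts trivially on $\mf{so}_N$ is what allows the $\mc G_N$-valued comparison to proceed exactly as in the simpler $\GL{N}$ setting.
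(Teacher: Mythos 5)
Your proof is correct and follows the paper's argument in all essentials: reduce surjectivity of $R^{\mr{univ}}_{\mr{GL}} \to R^{\mr{univ}}_G$ to injectivity of $t_G \to t_{\mr{GL}}$ via Nakayama, and obtain that injectivity by realizing $\bar r(\mf g)$ as a $\Gamma_{F^+,S}$-equivariant direct summand of the ambient adjoint module using semisimplicity of $\mf{gl}_N$ as a $G$-module for $p$ large (the paper cites Serre, \emph{Sur la semi-simplicit\'e des produits tensoriels de repr\'esentations de groupes}, Prop.~2, for exactly this). Your explicit check that $\jmath$ acts as the identity on $\mf{so}_N$, which is what makes $dR(\mf g) \hookrightarrow \mf{gl}_N$ genuinely $\Gamma_{F^+,S}$-equivariant for the $\rho(\bar r)$-action, spells out a compatibility the paper leaves implicit; the paper first establishes surjectivity for the unrestricted deformation rings and then passes to the quotients imposing local conditions, whereas you track the local conditions directly on tangent spaces, but these are equivalent pieces of bookkeeping.
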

\begin{proof}
The tangent space of $R^{\mr{univ}}_{\mr{GL}}$ is a subspace of $H^1(\Gamma_{F^+,S}, \rho(\bar{r})(\mf{gl}_{N}))$ and
the tangent space of $R^{\mr{univ}}_{G}$ is a subspace of $H^1(\Gamma_{F^+,S}, \bar r(\mf{g}))$. For $p$ large enough, $\mf{gl}_{N}$ is by \cite[Proposition 2]{ser:ss} a semisimple $G$--module, so \textit{a fortiori} $\bar r(\mf{g})$ is a $\Gamma_{F^+, S}$--direct summand of $\rho(\bar{r})(\mf{gl}_{N})$. It follows that the natural map $H^1(\Gamma_{F^+,S}, \bar r(\mf{g})) \to H^1(\Gamma_{F^+,S}, \rho(\bar{r})(\mf{gl}_{N}))$ is injective; the dual map is surjective, and we conclude by Nakayama's lemma that the map on universal deformation rings without local conditions is surjective. It then follows immediately that $R^{\mr{univ}}_{\mr{GL}} \to R^{\mr{univ}}_{G}$ is surjective as well. 
\end{proof}

By the proof of \cite[Theorem 4.3.1]{blggt}, $R^{\mr{univ}}_{\mr{GL}}$ is $\mc O$--finite.
Lemma \ref{defcompare} then implies that $R^{\mr{univ}}_{G}$ is $\mc O$--finite. We claim that $R^{\mr{univ}}_{G}$ has Krull dimension at least one. Indeed, this follows from \cite[Theorem B]{bg:Gdef}: the assumptions there are satisfied since we have assumed that $\bar r$ is odd, $R \circ \bar r|_{F^+(\mu_p)}$ is irreducible (which, under our assumption on $p$, implies that $H^0(\Gamma_{F^+},\bar r(\mf{g})(1))$ vanishes), and the Hodge--Tate cocharacters are regular. Thus $R^{\mr{univ}}_{G}$ has a $\bQp$--point, say $r: \Gamma_{F^+} \to G(\bQp)$. Then $R \circ r$ satisfies the hypotheses in \cite[Theorem C]{blggt}: (1) is clear; (2) holds since $R$ is valued in $\SO{N}$ by assumption; (3) holds because for $v \vert p$, $R \circ r|_v$ lies on the same (potentially crystalline) component as the potentially diagonalizable point $R\circ r_v=\rho(r_v)$; since potentially crystalline deformation rings are regular, there is a unique potentially crystalline component of $\op{Spec} R^{\Box, \un{\mu}(\rho(r_v)), \rho(\sigma_v)}_{\rho(\bar{r})|_v}\otimes \bQp$ passing through $\rho(r_v)$. It follows that $R \circ r|_v$ is potentially diagonalizable. (4) holds by our assumption on $\bar r$. Therefore, Theorem C of \textit{loc.cit.} implies that $R \circ r$ is potentially automorphic and belongs to a strictly compatible system of $\ell$-adic representations. Let us denote this compatible system by $\{R_{\lambda}\}$, where $R_{\lambda}: \Gamma_{F^+} \to \gl{N}{\ov{M}_{\lambda}}$ is a continuous representation with $M$ a number field and $\lambda$ primes of $M$. Suppose that for $v \vert p$, the Hodge--Tate weights of $R \circ r_v$ (and hence $R \circ r|_v$) are extremely regular in the sense of \cite[\S 2.1]{blggt}, then \cite[Theorem D]{blggt} (which ultimately relies on Larsen's work \cite{lar:max}) and the standard Brauer induction argument (see \cite[Theorem 5.5.1]{blggt}) imply that for a density one set of rational primes $l$, $R_{\lambda}$ is irreducible for $\lambda \vert l$.

We summarize the above discussion in the following:
\begin{thm}\label{lifting thm}
Retain the assumptions on $G$ and $\bar r: \Gamma_{F^+,S} \to G(\bFp)$ imposed at the beginning of this section. Then $\bar r$ has a lift $r: \Gamma_{F^+} \to G(\bQp)$ unramified outside $S$ such that 
\begin{itemize}
    \item For each $v\in S$ not lying above $p$, $r|_v:= r|_{\Gamma_{F^+_v}}$ and $r_v$ lie on the same irreducible component of $\op{Spec}(R^{\Box}_{\bar r|_v} \otimes \bQp)$.
    \item For each $v \vert p$, $r|_v$ and $r_v$ lie on the same irreducible component of $\op{Spec} R^{\Box, \un{\mu}(r_v), \sigma_v}_{\bar r|_v}\otimes \bQp$. Moreover, $R \circ r|_v$ is potentially diagonalizable in the sense of \cite{blggt} with regular Hodge--Tate weights.  
    \item $R \circ r$ is potentially automorphic in the sense of \cite{blggt}.
    \item $R \circ r$ is part of a strictly compatible system $R_{\lambda}: \Gamma_{F^+} \to \gl{N}{\ov{M}_{\lambda}}$ indexed by finite places $\lambda$ of some number field $M$. If in addition, the Hodge--Tate weights of $R \circ r|_v$  are extremely regular in the sense of \cite[\S 2.1]{blggt}, then for a density one set of rational primes $l$, $R_{\lambda}$ is irreducible for $\lambda \vert l$.
\end{itemize}
\end{thm}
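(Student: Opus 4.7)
The theorem essentially collects the consequences of the deformation-theoretic framework set up in the preceding discussion, so the plan is to organize that framework into a clean proof.

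The first step is to produce the lift $r$ itself via a Khare--Wintenberger-style comparison argument. I would set up two global deformation rings: $R^{\mr{univ}}_G$, classifying lifts of $\bar r$ unramified outside $S$ that at each $v \in S$ lie on the fixed irreducible component $\mc{C}(r_v)$ (of the appropriate type-fixed lifting ring at places above $p$), and $R^{\mr{univ}}_{\mr{GL}}$, the analogous ring for $\rho(\bar r)$ with fixed multiplier $\delta_{F/F^+}$ and components $\mc{C}(\rho(r_v))$. The absolute irreducibility of $R \circ \bar{r}|_{F(\mu_p)}$ ensures both rings are well-defined, and the compatibility of the chosen components produces a natural $\mc O$-algebra map $R^{\mr{univ}}_{\mr{GL}} \to R^{\mr{univ}}_{G}$.

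The crux of the argument is to deduce $\mc O$-finiteness of $R^{\mr{univ}}_{G}$ from known $\mc O$-finiteness of $R^{\mr{univ}}_{\mr{GL}}$. The latter is provided by the proof of \cite[Theorem 4.3.1]{blggt} (using potential diagonalizability of $R \circ r_v$ at $v \mid p$, the polarization hypothesis, and absolute irreducibility of $R \circ \bar r|_{F(\mu_p)}$). Surjectivity of the comparison map is precisely Lemma \ref{defcompare}, so $R^{\mr{univ}}_{G}$ is $\mc{O}$-finite. Next, I would invoke \cite[Theorem B]{bg:Gdef} to see that $R^{\mr{univ}}_{G}$ has Krull dimension at least one: oddness of $\bar{r}$, vanishing of $H^0(\Gamma_{F^+}, \bar{r}(\mathfrak{g})(1))$ (a consequence of absolute irreducibility of $R \circ \bar r|_{F^+(\mu_p)}$ together with the largeness of $p$), and regularity of the Hodge--Tate cocharacters are all in force. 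Combined with $\mc O$-finiteness, this yields a $\bQp$-point, which is the desired lift $r$.

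The remaining assertions follow by unwinding the construction. At $v \in S$ not above $p$, $r|_v$ lies on $\mc{C}(r_v)$ by definition. At $v \mid p$, $r|_v$ lies on the same potentially crystalline component of the type-fixed lifting ring as the potentially diagonalizable point $r_v$; since potentially crystalline deformation rings are regular, each point lies on a unique component, and potential diagonalizability is preserved along an irreducible component (via the $\mc{G}_N$-picture, where $\rho(r_v)$ is potentially diagonalizable), giving potential diagonalizability of $R \circ r|_v$ with regular Hodge--Tate weights. With these verifications in hand, $R \circ r$ satisfies the hypotheses of \cite[Theorem C]{blggt}, which yields potential automorphy and embeds $R \circ r$ into a strictly compatible system $\{R_\lambda\}$.

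Finally, for the density-one irreducibility statement under extreme regularity of the Hodge--Tate weights of $R \circ r|_v$, I would apply \cite[Theorem D]{blggt}, whose conclusion combined with the standard Brauer induction argument of \cite[Theorem 5.5.1]{blggt} gives irreducibility of $R_\lambda$ for $\lambda$ above a density one set of rational primes. The main technical obstacle in the whole argument is really packaged into the comparison lemma (Lemma \ref{defcompare}) and the type-fixed component-tracking at places above $p$; once those are available, the conclusions follow mechanically from the machinery of \cite{blggt} and \cite{bg:Gdef}.
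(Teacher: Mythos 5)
Your proposal matches the paper's argument step for step: the same two deformation rings $R^{\mr{univ}}_{\mr{GL}}$ and $R^{\mr{univ}}_G$, the same surjectivity comparison via Lemma \ref{defcompare}, the same appeal to the proof of \cite[Theorem 4.3.1]{blggt} for $\mc O$-finiteness, \cite[Theorem B]{bg:Gdef} for positive Krull dimension, and then \cite[Theorem C]{blggt} and \cite[Theorem D]{blggt} with the Brauer induction argument to close. This is essentially the paper's own proof.
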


\section{Compatible systems of Spin Galois representations}\label{sec:compatible systems}
In this section, we use the lifting method of \S \ref{sec:lifting gal} to deform the representation $\bar r: \galQ \to \spin{m}{\Fp}$ in Proposition \ref{mod p rep} to a characteristic zero representation satisfying favorable local conditions. Moreover, we will show that the associated compatible system of Galois representations $\{R_{\lambda}\}$ has $\Spin{m}$ monodromy for a density one set of rational primes $l$ below $\lambda$. 

\textit{For the rest of this paper, we assume that $m$ satisfies the parity assumption in \S \ref{sec:mod p rep}}.

We need to specify the local deformations of $\bar r$, following the notation in \S \ref{sec:lifting gal}.
Let $S$ be the union of $\infty$, $p, q$ and the places of $\rats$ at which $\bar r$ is ramified. We choose a quadratic field $F$ such that $F$ does not contain $\zeta_p$, $\mr{spin} \circ \bar r|_{F(\mu_p)}$ remains irreducible, and all the finite places in $S$ split in $F/\rats$. Suppose that we have a homomorphism $r: \galQ \to \spin{m}{A}$ for some ring $A$; as in \S \ref{sec:lifting gal}, we define $\rho(r) \colon \Gamma_{\rats} \to \mc{G}_{N}(A)$ as the composite (recall that the spin representation lands in $\SO{N}$ for $m$ satisfying the parity assumption)
\[
\Gamma_{\rats} \xrightarrow{\mr{spin}(r) \times \mr{res}_{F}} \mr{SO}_{N}(A) \times \mr{Gal}(F/\rats) \to \mc{G}_{N}(A),
\] 
where the last map sends $g\in \mr{SO}_{N}(A)$ to its image in $\gl{N}{A}$ and sends the nontrivial element of $\gal{F}{\rats}$ to $\jmath$.

\textit{Deformation condition at $p$}.
Recall that $\bar r|_{p}=\prod_{\alpha \in \Delta} \alpha^{\vee} \circ \bkp[l]^{n_{\alpha}}$. Fix 
\[
\chi_T=\prod_{\alpha \in \Delta} \alpha^{\vee} \circ (\kappa^{\tilde n_{\alpha}} \cdot [\bkp]^{-n_{\alpha}} \cdot [\bkp[l]]^{n_{\alpha}})
\]
lifting $\bar r|_{p}$, where $[\bkp]$, resp. $[\bkp[l]]$ denotes the Teichmuller lift of $\bkp$, resp. $[\bkp[l]]$, and $\tilde n_{\alpha} \equiv n_{\alpha} \mod l-1$ are greater than one and sufficiently general ($\tilde n_{\alpha}>1$ ensures that our characteristic zero lifts are potentially crystalline, cf. \cite[Lemma 4.8]{pat:exm}). 
We take $r_p$ to be $\chi_T$. Let $\mc C(r_p)$ be an ordinary potentially crystalline component of $\op{Spec} R^{\Box, \mu(r_p), \sigma(r_p)}_{\bar r|_p}\otimes \bQp$ containing $r_p$ determined by the ordinary deformation condition of \cite[Definition 4.1]{pat:exm} with respect to the Borel $B \supset T$ associated to $\Delta$ (Proposition \ref{mod p rep}, (5) ensures that this deformation condition is well-defined). Note that the genericity of $\{\tilde n_{\alpha}\}$ for $\alpha \in \Delta$ and the fact that $\Lambda_{spin}$ is multiplicity-free imply that $\mr{spin} \circ r_p$ has regular Hodge--Tate weights. We may and do assume that the integers $\tilde n_{\alpha}$ are chosen so that the Hodge--Tate weights of $\mr{spin} \circ r_p$ are extremely regular in the sense of \cite[\S 2.1]{blggt}. Let $\mc C(\rho(r_p))$ ($\rho(r_p)=\mr{spin} \circ r_p$) be the corresponding potentially crystalline component of $\op{Spec} R^{\Box, \mu(\rho(r_p)), \rho(\sigma(r_p))}_{\rho(\bar{r})|_p}\otimes \bQp$.

\textit{Deformation condition at $q$}.
Define $\rho_q: \Gamma_{\rats_q} \to \sl{2}{\Zp}$ by $\rho_q(\sigma_q)=\bp q&0\\0&1 \ep$ and $\rho_q(\tau_q)=\bp 1&p\\0&1 \ep$, where $\tau_q$ is a topological generator of $\gal{\rats_q^{tame}}{\rats_q^{unr}}$ and $\sigma_q \in \gal{\rats_q^{tame}}{\rats_q}$ is a lift of the Frobenius element in $\gal{\rats_q^{unr}}{\rats_q}$ satisfying $\sigma_q \tau_q \sigma_q^{-1}=\tau_q^q$. We set $r_q=\varphi \circ \rho_q$, where $\varphi: \SL{2} \to \Spin{m}$ is the principal $\SL{2}$ homomorphism with respect to $T$ and a Borel $B \supset T$ of $\Spin{m}$. Then $r_q$ lifts $\bar r|_q$. Suppose that $l$ is greater than $h_{\Spin{m}}$ (see Proposition \ref{mod p rep}), then the Steinberg deformation condition in \cite[\S 4.3]{pat:exm} (with respect to $B$) is well-defined, which corresponds to an irreducible component $\mc C(r_q)$ of $\op{Spec} R^{\Box}_{\bar r|_q} \otimes \bQp$ passing through $r_q$ . Let $\mc C(\rho(r_q))$ be the corresponding irreducible component of $\op{Spec} R^{\Box}_{\rho(\bar r)|_q} \otimes \bQp$.

\textit{Deformation conditions at other primes}.
Let $v$ be a prime of $\rats$ outside $\{p,q\}$ at which $\bar r$ is ramified. We have $\bar r(I_v) \subset T_{\Fp}\cdot \tilde D$, which implies (since we are assuming $p$ is sufficiently large) that $|\bar r(I_v)|$ is not divisible by $p$. We take the minimal prime to $p$ deformation condition of \cite[\S 4.4]{pat:exm} at $v$, which determines irreducible components of the local deformation rings as before. 

\begin{prop}\label{char zero spin rep}
Let $\bar r: \galQ \to \spin{m}{\bFp}$ be the mod $p$ representation in Proposition \ref{mod p rep} (recall that $m \geq 7$ and $m \equiv 0,1,7 \mod 8$). Then $\bar r$ admits a characteristic zero lift $r: \galQ \to \spin{m}{\bZp}$ unramified outside a finite set of primes such that 
\begin{enumerate}
    \item $r|_p$ is ordinary in the sense of \cite[\S 4.1]{pat:exm} with Hodge--Tate cocharacter $\prod_{\alpha \in \Delta} (\alpha^{\vee})^{\tilde n_{\alpha}}$ and $R \circ r|_p$ is potentially diagonalizable in the sense of \cite{blggt} with extremely regular Hodge--Tate weights (in the sense of \cite[\S 2.1]{blggt}).  
    \item $r|_q$ is Steinberg in the sense of \cite[\S 4.3]{pat:exm}, which is moreover ramified.
    \item The Zariski closure of the image of $r$ equals $\Spin{m}$.
    \item $R \circ r$ is potentially automorphic, i.e. there exist a totally real extension $F^+/\rats$ and a regular L--algebraic, cuspidal automorphic representation $\Pi$ of $\gl{N}{\mb A_{F^+}}$ such that 
$\mr{spin} \circ r|_{\Gal{F^+}} \cong r_{\Pi,\iota}$, where $\iota: \bQp \xrightarrow{\sim} \cmplx$ is a fixed field isomorphism.
\end{enumerate}
\end{prop}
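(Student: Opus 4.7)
The plan is to apply Theorem \ref{lifting thm} to $G=\Spin{m}$ and the mod $p$ representation $\bar r$ of Proposition \ref{mod p rep}, taking the prescribed local lifts $r_p$ (ordinary), $r_q$ (Steinberg via the principal $\SL{2}$), and $r_v$ (minimal prime-to-$p$) described in the three paragraphs preceding the proposition. First I would verify the axiomatic hypotheses of Section \ref{sec:lifting gal}. On the group side, the parity assumption together with Lemma \ref{NT sequence for Spin} guarantees an order-two lift of the longest Weyl element, and this element is the split Cartan involution $\rho^{\vee}(-1)$ by \cite[Lemma 2.3]{yun:mot}; Lemma \ref{spin rep} places $\mr{spin}$ inside $\SO{N}$; and the weights in $\Lambda_{spin}$ are pairwise distinct, so the formal character is multiplicity-free. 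On the representation side, oddness is Proposition \ref{mod p rep}(1); $R\circ r_p$ is ordinary, hence potentially diagonalizable, with Hodge--Tate weights made extremely regular by choosing $\{\tilde n_{\alpha}\}$ generically; and $\mr{spin}\circ\bar r|_{F(\mu_p)}$ remains absolutely irreducible thanks to the linear disjointness built into the choice of $F$ together with Proposition \ref{mod p rep}(6).

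Theorem \ref{lifting thm} then produces a lift $r:\galQ\to\spin{m}{\bZp}$ (after conjugating the compact image into an integral model) for which properties (1), (2), and (4) follow immediately. Since $r|_p$ lies on the same ordinary potentially crystalline component as $r_p$ and potentially crystalline deformation rings are regular, $r|_p$ is itself ordinary with the prescribed Hodge--Tate cocharacter $\prod_{\alpha\in\Delta}(\alpha^{\vee})^{\tilde n_{\alpha}}$, and $\mr{spin}\circ r|_p$ inherits both potential diagonalizability and extreme regularity from $\mr{spin}\circ r_p$. Similarly, $r|_q$ lies on the Steinberg component through $r_q$ and is therefore ramified in the sense of \cite[\S 4.3]{pat:exm}. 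Property (4) is precisely the potential automorphy conclusion of the theorem.

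The main obstacle is property (3), Zariski density of the image. My plan is to exploit the Steinberg condition at $q$: by the defining property of the Steinberg component, the image of $r|_q$ contains a regular unipotent element of $\Spin{m}$ arising from the principal homomorphism $\varphi:\SL{2}\to\Spin{m}$. Hence the Zariski closure $H$ of $r(\galQ)$ in $\Spin{m}$ has identity component $H^{\circ}$ containing a regular unipotent. On the other hand, absolute irreducibility of $\mr{spin}\circ\bar r|_{\Gamma_{\rats(\mu_p)}}$ from Proposition \ref{mod p rep}(6) forces absolute irreducibility of $\mr{spin}\circ r$, so $\mr{spin}|_{H^{\circ}}$ is an irreducible $N$-dimensional representation. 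I would then invoke the classification of closed connected subgroups of a simple algebraic group that contain a regular unipotent and act irreducibly in a prescribed faithful representation (Dynkin, refined by Saxl--Seitz), modeled on the analogous step in \cite[\S 4]{bce+}, to rule out every proper possibility and conclude $H^{\circ}=\Spin{m}$, and hence $H=\Spin{m}$.
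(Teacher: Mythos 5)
Your overall framework is right, and it matches the paper's: verify the axioms of \S\ref{sec:lifting gal} and invoke Theorem \ref{lifting thm}. But there are two genuine gaps, both of which the paper devotes nontrivial arguments to.

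First, for (2): lying on the Steinberg component $\mc C(r_q)$ does not by itself imply that $r|_q$ is ramified. The Steinberg component contains unramified points (take the lift $\varphi\circ\rho_q'$ with $\rho_q'(\tau_q)=\bp 1 & 0\\ 0 & 1\ep$), so the conclusion that the image contains a regular unipotent needs a further argument. The paper's proof establishes ramifiedness by exploiting (4): since $\Pi$ is cuspidal, $\Pi_w$ is generic for all $w$, so by local--global compatibility \cite{car:locgl} and \cite[Lemma 1.3.2]{blggt} the point $\mr{spin}\circ r|_{\Gal{F^+_w}}$ is a smooth point of its generic-fiber lifting ring; combining this with the smoothness of $\mr{spin}\circ r_q|_{\Gal{F^+_w}}$ and Choi's lemma \cite[Lemma 1.3.4(2)]{blggt}, one concludes that the inertial restrictions of the two lifts agree, and in particular $r|_q$ is ramified. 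This step is indispensable and not a formality: it is precisely what guarantees the regular unipotent you use in (3).

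Second, for (3): irreducibility plus Saxl--Seitz \cite[Theorem A]{ss97} narrows the Lie algebra of $G_r$ to $\mf{sl}_2$, $\mf g_2$ (for $m=7,8$), or $\mf{so}_m$ in the odd case, but in the even case $m=2n$ the list also includes $\mf{so}_{2n-1}$, and this cannot be excluded by irreducibility of $\mr{spin}\circ r$ alone, since the half-spin representation of $\Spin{2n}$ restricts irreducibly to the spin representation of $\Spin{2n-1}\subset\Spin{2n}$. The paper eliminates this possibility (and the others uniformly) by invoking the argument of \cite[Lemma 7.8]{pat:exm}: the Hodge--Tate cocharacter at $p$ is $\prod_{\alpha\in\Delta}(\alpha^\vee)^{\tilde n_\alpha}$, which, when the $\tilde n_\alpha$ are chosen sufficiently generically, is a regular cocharacter of the full-rank torus of $\Spin{m}$ that cannot factor through any of the smaller candidate groups. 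Your proposal needs this genericity argument, not just the representation-theoretic classification, to close the case of $\mf{so}_{2n-1}$.
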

\begin{proof}
By Proposition \ref{mod p rep}, $\bar r$ satisfies the hypotheses at the beginning of \S \ref{sec:lifting gal} with $F^+=\rats$, $G=\Spin{m}$ and $R=\mr{spin}$. (1) and (4) then follow immediately from Theorem \ref{lifting thm}. By the same theorem, $r|_q$ lies on $\mc C(r_q)$, and hence it is Steinberg. We will show that $r|_q$ is ramified. Since $\Pi$ is cuspidal, $\Pi_w$ is generic for all places $w$ of $F^+$, and therefore local--global compatibility (\cite{car:locgl}) and \cite[Lemma 1.3.2]{blggt} imply that $\mr{spin} \circ r|_{\Gal{F^+_w}}$ is a smooth point on its (generic fiber) local lifting ring. Let $w|q$ be any place of $F^+$ above $q$. Since $r|_{\Gal{F^+_w}}$ lies on the same irreducible component as the restriction $r_q|_{\Gal{F^+_w}}$ of the Steinberg-type lift constructed above, so do $\mr{spin} \circ r|_{\Gal{F^+_w}}$ and $\mr{spin} \circ r_q|_{\Gal{F^+_w}}$. The latter is visibly a smooth point, so by \cite[Lemma 1.3.4 (2)]{blggt} (due to Choi) the inertial restrictions $\mr{spin} \circ r|_{I_{F^+_w}}$ and $\mr{spin} \circ r_q|_{I_{F^+_w}}$ are isomorphic. In particular, $r|_q$ is ramified, which proves (2). It remains to prove (3).
Let $G_r$ be the Zariski closure of $r$, which is reductive since $r$ is irreducible. Moreover, (2) implies that $G_r$ contains a regular unipotent element of $\Spin{m}$. It then follows from a theorem of Dynkin (\cite[Theorem A]{ss97}) that the Lie algebra of $G_r$ is one of $\mf{sl}_2$, $\mf g_2$ and $\mf{so}_{2n+1}$ when $m=2n+1$ (where $\mf g_2$ may occur only when $m=7$); one of $\mf{sl}_2$, $\mf g_2$, $\mf{so}_{2n-1}$ and $\mf{so}_{2n}$ when $m=2n$ (where $\mf g_2$ may occur only when $m=8$). 
The argument of \cite[Lemma 7.8]{pat:exm} then implies that $G_r=\Spin{m}$ if the integers $\{\tilde n_{\alpha}\}_{\alpha \in \Delta}$ are taken to be sufficiently general.
\end{proof}

\begin{thm}\label{spin compatible system} 
Let $r$ be as in Proposition \ref{char zero spin rep}. Then $\mr{spin} \circ r$ is part of a strictly compatible system $R_{\lambda}: \Gal{\rats} \to \gl{N}{\ov{M}_{\lambda}}$ indexed by finite places $\lambda$ of some number field $M$ with regular Hodge--Tate weights such that for a density one set of rational primes $l$, the Zariski closure of the image of $R_{\lambda}$ equals $\Spin{m}$ for $\lambda \vert l$.
\end{thm}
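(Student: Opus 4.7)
The plan is to invoke Theorem \ref{lifting thm} to produce the compatible system and then, for each $\lambda$ in the density-one set, repeat the Zariski-closure analysis already carried out in the proof of Proposition \ref{char zero spin rep}(3) --- transported along the compatible system via the Steinberg place $q$.

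Concretely, I would first observe that Proposition \ref{char zero spin rep} has already verified every hypothesis of \S \ref{sec:lifting gal} for $G = \Spin{m}$, $R = \mr{spin}$, $F^+ = \rats$, and moreover the integers $\{\tilde n_\alpha\}$ were chosen so that the Hodge--Tate weights of $\mr{spin}\circ r|_p$ are extremely regular in the sense of \cite[\S 2.1]{blggt}. Consequently, the last bullet of Theorem \ref{lifting thm} directly yields a strictly compatible system $R_\lambda\colon \galQ\to \gl{N}{\ov M_\lambda}$ extending $\mr{spin}\circ r$, and guarantees that $R_\lambda$ is irreducible for $\lambda$ above a density-one set of rational primes $l$. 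Regularity of the Hodge--Tate weights is again inherited from $R\circ r|_p$ by strict compatibility.

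Next, fix such a $\lambda$ and let $G_\lambda\subset \GL_{N,\ov M_\lambda}$ be the Zariski closure of $R_\lambda(\galQ)$. Because $R_\lambda$ is irreducible, $G_\lambda$ is reductive. Strict compatibility at $q$ --- combined with the facts that $\mr{spin}\circ r|_q$ is a smooth point on its local lifting ring and that $\mr{spin}\circ r_q|_q$ is Steinberg, via \cite[Lemma 1.3.4(2)]{blggt} as in the proof of Proposition \ref{char zero spin rep}(2) --- implies that $R_\lambda|_{I_q}$ is isomorphic to $(\mr{spin}\circ r_q)|_{I_q}$. In particular $R_\lambda(I_q)$ contains a regular unipotent element of $\mr{spin}(\Spin{m})\subset \GL_N$, so $G_\lambda^\circ$ is a connected reductive subgroup of $\GL_N$ containing a regular unipotent of $\mr{spin}(\Spin{m})$.

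From here I would apply Dynkin's theorem in the form of \cite[Theorem A]{ss97} exactly as in the proof of Proposition \ref{char zero spin rep}(3) to list the possible Lie algebras of $G_\lambda^\circ$ (namely $\mf{sl}_2$, $\mf g_2$ in the exceptional low-rank cases, $\mf{so}_{2n-1}$, $\mf{so}_{2n}$, or $\mf{so}_{2n+1}$), and then rule out all proper possibilities by quoting the argument of \cite[Lemma 7.8]{pat:exm}: the Hodge--Tate weights of $R_\lambda$ are the same multiset as those of $\mr{spin}\circ r|_p$, which were arranged to be sufficiently generic. This forces $G_\lambda^\circ = \mr{spin}(\Spin{m})$; connectedness of $G_\lambda$ then follows because $\mr{spin}(\Spin{m})$ is self-normalizing in its target (equivalently, because $R_\lambda$ restricted to an open subgroup remains $\mr{spin}(\Spin{m})$-valued by the same Zariski-closure argument applied to $R_\lambda|_{\Gamma_K}$ for any finite $K/\rats$). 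I expect the main obstacle to be purely bookkeeping: verifying that the integers $\{\tilde n_\alpha\}$ can be chosen generically enough so that the Hodge--Tate-weight constraint simultaneously rules out every proper subgroup in the Dynkin list, and doing so uniformly across all $\lambda$ in the density-one set --- a task which is already done at the prime $p$ and then propagates by strict compatibility, but which must be phrased independently of the particular $\lambda$.
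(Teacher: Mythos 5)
There is a genuine gap at the Saxl--Seitz step. You propose to apply \cite[Theorem A]{ss97} to the connected reductive group $G_\lambda^\circ \subset \GL_N$ on the grounds that it contains the unipotent element $\mr{spin}(u)$, where $u$ is a regular unipotent of $\Spin{m}$. But Saxl--Seitz classifies reductive subgroups of a simple group $H$ containing a \emph{regular unipotent of $H$}, and $\mr{spin}(u)$ is not a regular unipotent of $\GL_N$: under the principal $\SL_2$ of $\Spin{m}$, the spin representation decomposes into several irreducible summands once $m\geq 7$ (the largest Jordan block of $\mr{spin}(u)$ has size $\tfrac{n(n+1)}{2}+1$, far smaller than $N=2^n$ or $2^{n-1}$), so $\mr{spin}(u)$ has several Jordan blocks. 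To invoke Saxl--Seitz one must first know that $G_\lambda$ lies (up to conjugacy) inside $\mr{spin}(\Spin{m})$, so that $u$ can be regarded as a regular unipotent of the ambient simple group. This is exactly what the paper establishes first, by invoking Larsen--Pink's theorem on determining representations from invariant dimensions (\cite[Theorem 4]{lp:invdim}), together with the $\ell$-independence of the formal character (\cite[Propositions 6.12, 6.14]{lp:lind}); it then enumerates the basic similarity classes, observes that in the odd case they all factor through $\Spin{2n+1}$, deduces the existence of a factorization $R_\lambda = \mr{spin}\circ r_\lambda$, and only then, via \cite[Lemma 3.5]{bce+}, transports the regular unipotent \emph{inside $\Spin{m}$} and applies Saxl--Seitz. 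Your proposal skips this entire Larsen--Pink reduction, which is where the real content of the theorem lies.

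A secondary discrepancy: to rule out the proper subgroups appearing in the Saxl--Seitz list you quote \cite[Lemma 7.8]{pat:exm}, which is the tool the paper uses in the proof of Proposition \ref{char zero spin rep}(3) at the single prime $p$ where the Hodge--Tate cocharacter $\prod_\alpha (\alpha^\vee)^{\tilde n_\alpha}$ lives literally inside $\Spin{m}$. For an arbitrary $\lambda$ one does not start with a $\Spin{m}$-valued representation whose $p$-adic Hodge cocharacter is visibly in that form, so this argument does not simply propagate by strict compatibility. The paper instead rules out the proper possibilities from the Larsen--Pink formal-character rigidity (the rank of $G_\lambda^\circ$ and its formal character are $\ell$-independent, eliminating $\mf{sl}_2$, $\mf{g}_2$, and the lower-rank orthogonal algebras at once).
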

\begin{proof}
By Proposition \ref{char zero spin rep}, (1) and Theorem \ref{lifting thm}, (4), for a density one set $\Sigma$ of rational primes $l$, $R_{\lambda}$ is irreducible for $\lambda \vert l$. Then \cite[Theorem 4]{lp:invdim} implies that for $l \in \Sigma$ and $\lambda|l$, $R_{\lambda}$ factors through $\Spin{m}$. In fact, let $G_{\lambda}$ be the Zariski closure of the image of $R_{\lambda}$. By Proposition \ref{char zero spin rep}, (3), $G_{\lambda}=\Spin{m}$ for some $\lambda|p$, so $(G_{\lambda}, \mr{std}_{\lambda})$ (where $\mr{std}_{\lambda}$ denotes the canonical inclusion $G_{\lambda} \subset \GL{N}$) is similar to $(\mr{Spin}_{m}, \op{spin})$ in the sense of \cite{lp:invdim}, i.e. they have the same formal characters (since the group of connected components and the formal character of the connected component of the monodromy group are independent of $l$; see \cite[Propositions 6.12 and 6.14]{lp:lind}). If $m$ is even, then \cite[Theorem 4]{lp:invdim} implies that $G_{\lambda}=\Spin{m}$; if $m=2n+1$ is odd,  
the only `basic similarity classes' in the sense of \textit{loc.cit.} relevant to $(\mr{Spin}_{2n+1}, \op{spin})$ are products $(\prod_{i=1}^k \mr{Spin}_{2n_i+1}, \boxtimes_i \op{spin}_{2n_i+1})$ for some decomposition $n_1+ \cdots +n_k= n$, where $\op{spin}_{2n_i+1}$ denotes the appropriate spin representation. This external product is simply the restriction of the $2^n$-dimensional spin representation to $\prod \mr{Spin}_{2n_i+1} \subset \mr{Spin}_{2n+1}$, so in any case $G_{\lambda} \subset \mr{GL}_{2^n}$ factors (up to conjugacy) through $\mr{Spin}_{2n+1} \subset \mr{GL}_{2^n}$. It follows that $R_{\lambda}$ factors as $\op{spin}(r_{\lambda})$ for some $r_{\lambda} \colon \Gamma_{\rats} \to \mr{Spin}_{m}(\ov{M}_{\lambda})$. It remains to show that when $m$ is odd, $G_{\lambda}=\Spin{m}$ for $\lambda \vert l$ with $l$ belonging to a density one set of rational primes. By Proposition \ref{char zero spin rep} (2) and (4) and local--global compatibility (\cite{car:locgl}), $G_{\lambda}$ contains (up to conjugacy) the image under the spin representation of a regular unipotent element of $\Spin{m}$ as long as $\lambda$ is not above $q$. By \cite[Lemma 3.5]{bce+}, $r_{\lambda}$ (for $\lambda$ above some $l \in \Sigma-\{q\}$) then has image containing a regular unipotent element. We deduce from Dynkin's theorem (\cite[Theorem A]{ss97}) that the Zariski closure of the image of $r_{\lambda}$ equals $\Spin{m}$.
\end{proof}

Using some results of Larsen (\cite{lar:max}), we give an application of the above to the inverse Galois problem for the $\Fp$--points of spin groups.
\begin{cor}\label{igp for spin}
For $m \geq 7$, $m \equiv 0,1,7 \mod 8$, $\spin{m}{\Fp}$ is the Galois group of a finite Galois extension of $\rats$ for $p$ belonging to a set of rational primes of positive density.
\end{cor}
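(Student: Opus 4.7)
The plan is to deduce this directly from the compatible system of Theorem \ref{spin compatible system} by invoking a maximality theorem of Larsen \cite{lar:max} on mod-$\lambda$ images of compatible systems. By Theorem \ref{spin compatible system}, there is a density one set $\Sigma$ of rational primes $l$ such that for every $\lambda \mid l$ we have $R_{\lambda} = \mr{spin} \circ r_{\lambda}$ with $r_{\lambda} \colon \galQ \to \spin{m}{\ov M_{\lambda}}$ having Zariski-dense image in $\Spin{m}$. After conjugation the image of $r_{\lambda}$ is contained in $\spin{m}{\mc O_{\lambda}}$, where $\mc O_{\lambda}$ is the ring of integers in a finite extension of $M_{\lambda}$; reducing modulo the maximal ideal gives a continuous homomorphism $\bar r_{\lambda} \colon \galQ \to \spin{m}{k_{\lambda}}$ into the spin group over the residue field $k_{\lambda}$ of characteristic $l$.

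The key input is Larsen's maximality theorem \cite{lar:max}: for a strictly compatible system whose algebraic monodromy is a fixed connected, simply connected, semisimple group $G$ (here $G = \Spin{m}$), there is a positive density set of rational primes $l$ for which, for $\lambda \mid l$, the image of $\bar r_{\lambda}$ is, up to conjugation in $\spin{m}{\bFl}$, precisely the finite group $\spin{m}{k_{\lambda}}$. To force $k_{\lambda} = \Fp$ with $p = l$ (i.e.\ residue degree one), I would impose via Chebotarev that $l$ split completely in the coefficient field $M$; this cuts out a further set of rational primes of positive density. Intersecting $\Sigma$ with Larsen's maximality set and the splitting set still yields a set of positive density, and for each rational prime $p$ in this intersection there is a choice of $\lambda \mid p$ of residue degree one for which $\bar r_{\lambda}$ provides a surjection $\galQ \rsurj \spin{m}{\Fp}$.

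The fixed field of $\ker(\bar r_{\lambda})$ is then a finite Galois extension of $\rats$ with Galois group $\spin{m}{\Fp}$, which realizes $\spin{m}{\Fp}$ as a Galois group over $\rats$ for a positive density set of primes $p$, as required. The main obstacle is the precise invocation of Larsen's theorem: one must check that for our specific compatible system the image is \emph{exactly} $\spin{m}{k_{\lambda}}$, rather than a proper overgroup in $\spin{m}{\bFl}$ involving diagonal or outer-automorphism contributions. Because $\Spin{m}$ is simply connected and we may discard a density zero set of small primes, this hyperspeciality assertion is a direct consequence of the structural results in \cite{lar:max} together with the well-understood description of the normalizer of $\spin{m}{\Fp}$ inside $\spin{m}{\bFp}$.
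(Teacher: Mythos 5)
Your proposal takes essentially the same approach as the paper: invoke Larsen's maximality results \cite{lar:max} for the compatible system of Theorem \ref{spin compatible system}, restrict attention to primes splitting completely in the coefficient field $M$, intersect with the density one set of primes at which the monodromy is full $\Spin{m}$, and read off the surjection onto $\spin{m}{\Fp}$ on a positive density set.

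One technical point worth tightening: you write that, after conjugation, the image of $r_{\lambda}$ lies in $\spin{m}{\mc O_{\lambda}}$ where $\mc O_{\lambda}$ is the ring of integers of \emph{a finite extension} of $M_{\lambda}$, and then propose to control the residue field by requiring $l$ to split in $M$. But splitting in $M$ only gives $M_{\lambda}=\Ql$; it does not by itself prevent $\mc O_{\lambda}$ from being the ring of integers of an unramified extension of $\Ql$ with residue field strictly larger than $\Fl$. The paper closes this gap by first applying \cite[Lemma 5.3.1,(3)]{blggt}: after replacing $M$ by a finite extension, the compatible system can be assumed valued in $\gl{N}{M_{\lambda}}$ (no further extension needed). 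With that normalization, Larsen's \cite[Theorem 3.17]{lar:max} gives, for a relative density one set of primes in $\mr{cs}(M/\rats)$, that the image of $R_{\lambda}$ is a hyperspecial maximal compact subgroup of $G_{\lambda}(\Ql)=\spin{m}{\Ql}$, i.e.\ (a conjugate of) $\spin{m}{\Zl}$, which then surjects onto $\spin{m}{\Fl}$. Incorporating this descent-of-coefficients step makes your argument fully rigorous.
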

\begin{proof}
Let $\{R_{\lambda}\}$ be as in Theorem \ref{spin compatible system}. By \cite[Lemma 5.3.1,(3)]{blggt}, after replacing $M$ by a finite extension if necessary, we may assume that $R_{\lambda}$ is valued in $\gl{N}{M_{\lambda}}$. Let $\mr{cs}(M/\rats)$ be the set of rational primes $l$ that split completely in $M/\rats$. Then \cite[Theorem 3.17]{lar:max} implies that there is a subset $T $ of $\mr{cs}(M/\rats)$ of relative density one, such that for $l\in T$ and $\lambda \vert l$, $R_{\lambda}(\galQ)$ is a hyperspecial maximal compact subgroup of $G_{\lambda}(\Ql)$. By Theorem \ref{spin compatible system}, $G_{\lambda}(\Ql)$ equals $\spin{m}{\Ql}$ for a density one set of rational primes $l$. It follows that $R_{\lambda}(\galQ)=\spin{m}{\Zl} \rsurj \spin{m}{\Fl}$ for $l$ belonging to a set of rational primes of positive density.
\end{proof}

\begin{rmk}\label{zywina}
Using a completely analogous argument, one can show that for $n \geq 2$, $\so{2n+1}{\Fp}$, resp. $\so{4n}{\Fp}$ is the Galois group of a finite Galois extension of $\rats$ for $p$ belonging to a set of rational primes of positive density. We compare this with the main result in \cite{zyw:igp}, which shows that the finite simple groups $\Omega_{2n+1}(p)$ and $\mr{P\Omega}_{4n}^+(p)$ both occur as the Galois group of a Galois extension of $\rats$ for all integers $n \geq 2$ and all primes $p \geq 5$.  
\end{rmk}

\section{Realization of $R_{\lambda}$ in the cohomology of algebraic varieties}\label{sec:coh}
\begin{thm}\label{realization in coh}
The compatible system of Galois representations $\{R_{\lambda}\}$ in Theorem \ref{spin compatible system} is motivic in the following sense: there is a smooth projective variety $X/\rats$ and integers $i$ and $j$ such that $R_{\lambda}$ is a $\galQ$--subrepresentation of $\coh{i}{X_{\bQ}}{\bQl(j)}$. 
\end{thm}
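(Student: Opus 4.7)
The plan is to combine the potential automorphy of $\mr{spin} \circ r$ (Proposition \ref{char zero spin rep}(4)) with the standard realization of RACSDC automorphic Galois representations in the cohomology of unitary Shimura varieties, to obtain $R_\lambda$ inside the cohomology of some smooth projective variety $Y$ over a number field $K$; then to descend from $K$ to $\rats$ by the elementary observation that a smooth projective $K$-scheme is automatically a smooth projective $\rats$-scheme, with étale cohomology equal to the induction from $\Gal{K}$ to $\Gal{\rats}$ of the cohomology over $K$. Frobenius reciprocity then extracts $R_\lambda$ itself (rather than merely its induction) as an honest subrepresentation.

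In more detail: by Proposition \ref{char zero spin rep}(4) there is a totally real $F^+/\rats$ and a regular L--algebraic, cuspidal, essentially self-dual automorphic representation $\Pi$ of $\gl{N}{\mb A_{F^+}}$ such that $\mr{spin} \circ r|_{\Gal{F^+}} \cong r_{\Pi,\iota_l}$ (essential self-duality is automatic since $\mr{spin}$ factors through $\SO{N}$ under our parity hypothesis on $m$). Pick a CM quadratic extension $F/F^+$ for which $\Pi_F$ remains cuspidal; it is then RACSDC on $\gl{N}{\mb A_F}$. The standard construction of automorphic Galois representations via unitary Shimura varieties (Harris--Taylor, Shin, Caraiani--Scholze; compare \cite[\S 5]{bce+}) then produces a number field $K \supset F$ (the reflex field of a suitable PEL Shimura datum, enlarged as needed for level and compactification), a smooth projective variety $Y/K$ (a toroidal compactification of the relevant unitary Shimura variety), and integers $i, j$ such that
\[
R_\lambda|_{\Gal{K}} \;\hookrightarrow\; \coh{i}{Y \times_K \bQ}{\bQl(j)}
\]
as $\Gal{K}$--representations. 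Cuspidality of $\Pi$ ensures that $r_{\Pi,\iota_l}$ contributes to the interior cohomology and is not swamped by boundary terms.

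Now view $Y$ as a scheme over $\rats$ via the composite $Y \to \Spec{K} \to \Spec{\rats}$. Both morphisms are projective (the latter being finite), so $Y/\rats$ is projective; both are smooth (the latter since $K/\rats$ is separable), so $Y/\rats$ is smooth. A direct computation gives
\[
Y \times_\rats \bQ \;=\; Y \times_K (K \otimes_\rats \bQ) \;=\; \coprod_{\sigma \colon K \hookrightarrow \bQ} Y_\sigma,
\]
and hence, as $\Gal{\rats}$--representations,
\[
\coh{i}{Y \times_\rats \bQ}{\bQl(j)} \;\cong\; \mr{Ind}_{\Gal{K}}^{\Gal{\rats}} \coh{i}{Y \times_K \bQ}{\bQl(j)}.
\]
The Frobenius reciprocity map $R_\lambda \to \mr{Ind}_{\Gal{K}}^{\Gal{\rats}}(R_\lambda|_{\Gal{K}})$ corresponding to the identity endomorphism of $R_\lambda|_{\Gal{K}}$ is injective. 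Composing with the inclusion from the previous paragraph produces the desired $\Gal{\rats}$--equivariant embedding $R_\lambda \hookrightarrow \coh{i}{Y \times_\rats \bQ}{\bQl(j)}$, and taking $X = Y$ (viewed over $\rats$) completes the proof.

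The main obstacle lies in the first step: realizing $R_\lambda|_{\Gal{K}}$ as an \emph{honest subrepresentation} (not merely a subquotient) of the cohomology of a smooth projective model of a unitary Shimura variety over $K$. This entails bookkeeping through the various base changes and descents between $\gl{N}$ over $F^+$, its base change to the CM $F$, the unitary group $U(N)/F^+$, and the reflex field $K$, along with careful treatment of the boundary in a smooth toroidal compactification. All of this is carried out in detail in \cite[\S 5]{bce+}, on which the present section is explicitly modeled; the subsequent descent from $K$ to $\rats$ is a purely formal Galois-theoretic manipulation.
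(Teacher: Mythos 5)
Your high-level strategy is exactly the one the paper follows: use potential automorphy to land in cohomology of a unitary Shimura variety over a finite extension, then descend via Frobenius reciprocity. The descent step you outline (Weil restriction of scalars to $\rats$, decomposition of the base change as a disjoint union, identification of cohomology with the induced representation, injectivity of the unit of the adjunction because $R_\lambda$ is irreducible) is what the paper does, and your formulation is correct.

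The gap is in the part you dismissed as ``carried out in detail in \cite[\S 5]{bce+}.'' That is not quite so, and the discrepancy is the one substantive point of \S \ref{sec:coh}. In \cite{bce+} the representation in question has odd dimension $27$, which places the Shin construction in \emph{Case ST} of \cite{shi:shi}. Here $N = 2^n$ (or $2^{n-1}$), which is even, placing us in \emph{Case END}. In Case END, Shin's attachment of a Galois representation $R_l(\Pi^0)$ to a conjugate self-dual cuspidal $\Pi^0$ on $\gl{N}{\mathbb{A}_F}$ that occurs inside the cohomology of a unitary Shimura variety requires the additional hypothesis that $\Pi^0$ be \emph{slightly regular} (Shin regular): the $2$-dimensional isobaric piece one splits off in the endoscopic transfer must not collide in infinitesimal character with the complementary $(N-1)$-dimensional piece. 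Without that hypothesis there is no clean identification of the relevant isotypic piece in $H^*$ of the Shimura variety. The paper ensures this by further adjusting the integers $\{\tilde n_\alpha\}$ appearing in the Hodge--Tate cocharacter from Proposition \ref{char zero spin rep}, which one is free to do at the construction stage. Your proposal never verifies or imposes slight regularity, so your first step (landing $R_\lambda|_{\Gal{K}}$ inside the interior cohomology as an honest subrepresentation) is not justified by the references you cite.

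Two smaller points. First, the representation one gets directly from Shin is a Tate and Hecke-character twist of the piece realized geometrically; to conclude that the twist still lies in $H^i(X_{\bQ}, \bQl(j))$ for some smooth projective $X$ one invokes \cite[IV. Proposition D.1]{dmos82} (possibly after a further finite extension of the base field). You implicitly fold this into ``bookkeeping,'' but it deserves to be named. Second, one passes from $F^+$ to a CM $F$ by base change (choosing $F$ so that the base change stays cuspidal and $R_\lambda|_{\Gal{F}}$ becomes conjugate self-dual in Shin's normalization), and then takes the dual to reconcile the differing local Langlands normalizations in \cite{blggt} and \cite{shi:shi}; your sketch has the base change but not the dual, which would otherwise cost you a sign in matching $R_\lambda$ to $R_l(\Pi^0)$.
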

The proof of this theorem is almost identical to the argument in \cite[\S 5]{bce+}, except for a few minor modifications. However, for the reader's convenience, we reproduce here part of their argument. The main reference for this argument is \cite{shi:shi}. In \cite[\S 5]{bce+}, the dimension of $\{R_{\lambda}\}$ is odd, which falls into (Case ST) of \cite{shi:shi}; in our case, the dimension of $\{R_{\lambda}\}$ (being a power of 2) is even, which falls into (Case END) of \cite{shi:shi}. This is why we need to modify the argument of \cite[\S 5]{bce+} to prove Theorem \ref{realization in coh}. Fix an isomorphism $\iota_l: \bQl \xrightarrow{\sim} \cmplx$; it is implicit in all of the constructions of \cite{shi:shi}. The compatible system $\{R_{\lambda}\}$ is by construction potentially automorphic, i.e. there is a totally real field $F^+/\rats$ such that $R_{\lambda}|_{\Gal{F^+}} \cong r_{l,\iota_l}(\Pi^0)$ for some cuspidal automorphic representation $\Pi^0$ of $\gl{N}{\mb A_{F^+}}$ (in the notation of \cite[Theorem 2.1.1]{blggt}). Let $F/F^+$ be a quadratic CM extension. Replacing $\Pi^0$ by its base change to $F$, we may assume that $R_{\lambda}|_{\Gal{F}} \cong r_{l,\iota_l}(\Pi^0)$ for a conjugate self-dual cuspidal automorphic representation $\Pi^0$ of $\gl{N}{\mb A_{F}}$. Then $R_{\lambda}|_{\Gal{F}} \cong R_l(\Pi^{0,\vee})$ in the notation of \cite[Theorem 7.5]{shi:shi} (note that the normalizations for the local Langlands correspondence used in \cite{blggt} and \cite{shi:shi} differ by a dual: see \cite[Theorem 2.1.1]{blggt} and \cite[\S 2.3]{shi:shi}). Replacing $\Pi^0$ by its dual, we may write
\[R_{\lambda}|_{\Gal{F}} \cong R_l(\Pi^0).\] 
We may further adjust the integers $\{\tilde n_{\alpha}\}$ in Proposition \ref{char zero spin rep} so that the associated automorphic representation $\Pi^0$ is \textit{slightly regular} (a.k.a. Shin regular) in the sense of \cite{shi:shi} (this is necessary in Case END). 

Let $n=N+1$ ($N$ is `$m$' in \cite{shi:shi}). \footnote{To keep our notation consistent with that of \cite{shi:shi}, we borrow the same letter $n$ from the preceding sections which was used to denote the rank of the spin group. We hope this does not cause any confusion.} We recall the construction of $R_l(\Pi^0)$. Let $E$ be an imaginary quadratic field not contained in $F$ satisfying the four bulleted conditions in Step (II) of the proof of \cite[Theorem 7.5]{shi:shi}. Replace $F$ by $FE$ and $\Pi^0$ by $\mr{BC}_{FE/F}(\Pi^0)$. Then the triple $(E,F,\Pi^0)$ satsfies the six bulleted conditions at the beginning of Step (I) of the proof of \textit{loc.cit}. Let $F'$ be an imaginary quadratic extension of $F^+$ satisfying the the three bulleted conditions in Step (I) of \textit{loc.cit}. Then replace $F$ by $FF'$ and $\Pi^0$ by $\mr{BC}_{FF'/F}(\Pi^0)$ so that \cite[Proposition 7.4]{shi:shi} applies to the triple $(E,F,\Pi^0)$. There is a Hecke character $\psi$ of $\mb A_E^{\times}/E^{\times}$ such that, setting $\Pi=\psi \boxtimes \Pi^1$ (which is an automorphic representation of the group $\gl{1}{\mb A_E} \times \gl{n}{\mb A_F}$; $\Pi^1$ is constructed from $\Pi^0$ in \cite[\S 7.1]{shi:shi}), we have (in the notation of \cite[Corollary 6.10]{shi:shi}
\footnote{In our case, $i=1$, $m_i=n-1$.}
)
\[
R_l(\Pi^0):=R''_l(\Pi):=R'_l(\Pi) \otimes \mr{rec}_{l,\iota_l} 
\big( (\varpi \circ N_{F/E}) \otimes |.|^{1/2} \big), 
\footnote{$\varpi: \mb A_E^{\times}/E^{\times} \to \cmplx^{\times}$ is the Hecke character defined in \cite[\S 3.1]{shi:shi}.}
\]
where $R'_l(\Pi):=\tilde R'_l(\Pi) \otimes \mr{rec}_{l,\iota_l}(\psi^c)|_{\Gal{F}}$ and
\[ 
C_G \cdot \tilde R'_l(\Pi)= \sum_{\pi^{\infty} \in \mc R_l(\Pi)} R_{\xi,l}^{n-1}(\pi^{\infty})^{ss}
\]
(see \cite[(5.5),(6.23)]{shi:shi}; $G$ the unitary similitude group defined in \cite[\S 5.1]{shi:shi}).
The rest of the argument is identical to that of \cite[\S 5]{bce+}: one shows that $\tilde R'_l(\Pi)$ is a subrepresentation of the cohomology of a smooth projective variety over $F$, which implies that $R_{\lambda}|_{\Gal{F}}$ (which is a twist of $\tilde R'_l(\Pi)$) is a subrepresentation of a smooth projective variety over $F$ after possibly replacing $F$ by a finite extension (using \cite[IV. Proposition D.1]{dmos82}). It then follows from Frobenius reciprocity and the irreducibility of $R_{\lambda}$ that $R_{\lambda}$ is a subrepresentation of a smooth projective variety over $\rats$. This finishes the proof of Theorem \ref{realization in coh}.

\section{Comparison with the work of Kret and Shin}\label{sec:KS}
In this section, we explain how the main theorem of \cite{kret-shin} implies a stronger version of Theorem \ref{main theorem} in the case when $m=2n+1$ is odd. However, \cite{kret-shin} does not have any implications on the case when $m$ is even, so Theorem \ref{main theorem} contains the first sighting of the group $\Spin{m}$ for $m \equiv 0 \mod 8$ as any sort of monodromy group of motivic Galois representations.

The maximal torus $T$ of $\Spin{2n+1}$ may be described as 
\[T \xrightarrow{\sim} \{ (z,t_1, ... ,t_n) \in \mb G_m^{n+1}: z^2=t_1\cdots t_n \}.\]
For $1 \leq i \leq n$, let $\chi_i \in X^{\bullet}(T)$ be the character defined by $\chi_i(z,t_1, ... ,t_n)=t_i$, and let $\lambda_i \in \frac{1}{2}X_{\bullet}(T)$ be the cocharacter defined by $(2\lambda_i)(t)=(t,1, ... ,1,t^2,1, ... ,1)$, where $t^2$ is located at the $(i+1)$-th entry. Then the character lattice $X^{\bullet}(T)$ is generated by $\chi_1, ... ,\chi_n$ and $\frac{1}{2}(\sum_i \chi_i)$, where $\frac{1}{2}(\sum_i \chi_i)$ sends $(z,t_1, ... ,t_n)$ to $z$; and the cocharacter lattice $X_{\bullet}(T)$ equals $\{\sum_i m_i\lambda_i: m_i \in \ints, \sum m_i \in 2\ints \}$. 

Note that $\Sp{2n}^{\vee}=\SO{2n+1}$, $\mr{PSp}_{2n}^{\vee}=\Spin{2n+1}$ and $\GSp{2n}^{\vee}=\GSpin{2n+1}$.

Let $\pi_{\infty}$ be a discrete series representation of $\sp{2n}{\reals}$ with a regular and tempered L--parameter $\phi_{\infty}: W_{\reals} \to \so{2n+1}{\cmplx}$ defined by 
\[ \phi_{\infty}(z)=diag(z^{m_1}\bar z^{-m_1}, ... ,z^{m_n}\bar z^{-m_n},1,z^{-m_n}\bar z^{m_n}, ... ,z^{-m_1}\bar z^{m_1}) \] for $z \in \cmplx^{\times}$, $m_i \in \ints$ and
$\phi_{\infty}(j)=J$, where $J=[a_{ij}]$ is the $(2n+1)$ by $(2n+1)$ matrix with $a_{n+1,n+1}=(-1)^n$, $a_{i,2n-i+2}=1$ for $i \neq n+1$, and $a_{ij}=0$ otherwise. 
We want to lift $\phi_{\infty}$ to $\spin{2n+1}{\cmplx}$. Note that $\phi_{\infty}|_{\cmplx^{\times}}$ has a unique lift 
\[ z \mapsto (\sum 2m_i\lambda_i)(\frac{z}{|z|}). \]
In particular, $-1 \mapsto (-1)^{\sum m_i}$ (we use $-1$ to also denote the nontrivial element in the center of $\Spin{2n+1}$ by abuse of notation). The element $j$ necessarily maps to a lift of $J \in \so{2n+1}{\cmplx}$ in $\spin{2n+1}{\cmplx}$, which is either $\omega_1\cdots\omega_n$ or $-\omega_1\cdots\omega_n$ (in the notation of Lemma \ref{NT sequence for Spin} and the paragraph preceding it). We have $(\omega_1\cdots\omega_n)^2=(-1)^{n(n+1)/2}$. To have a well-defined homomorphism $\tilde\phi_{\infty}: W_{\reals} \to \spin{2n+1}{\cmplx}$ lifting $\phi_{\infty}$, we must ensure that $\tilde\phi_{\infty}(j)\tilde\phi_{\infty}(z)\tilde\phi_{\infty}(j)^{-1}=\tilde\phi_{\infty}(\bar z)$ and $\tilde\phi_{\infty}(j)^2=\tilde\phi_{\infty}(-1)$. The first equality is automatic. The second equality is by the above equivalent to $(-1)^{n(n+1)/2}=(-1)^{\sum m_i}$. Therefore, $\phi_{\infty}$ lifts to $\spin{2n+1}{\cmplx}$ if and only if $\sum m_i$ has the same parity as $n(n+1)/2$. On the automorphic side, this says $\pi_{\infty}$ descends to a discrete series representation of $\mr{PSp}_{2n}(\reals)$ if and only if the integers $m_i$ appearing in its L--parameter satisfies $\sum m_i\equiv \frac{n(n+1)}{2} \mod 2$.

Now we ask: Of the discrete series that descends to $\mr{PSp}_{2n}(\reals)$, are any of them L--algebraic in the sense of \cite{buzzard-gee}? Consider their L--parameter $\tilde\phi_{\infty}|_{\cmplx}$, $z \mapsto (\sum 2m_i\lambda_i)(\frac{z}{|z|})=z^{\sum m_i\lambda_i}\bar z^{-\sum m_i\lambda_i}$, we have $\sum m_i\lambda_i \in X_{\bullet}(T)$ if and only if $\sum m_i \in 2\ints$. Therefore we have 

\begin{lem}\label{L-parameters}
Let $\pi_{\infty}$ and $\phi_{\infty}$ be as before. Then 
\begin{enumerate}
    \item $\pi_{\infty}$ descends to a discrete series representation of $\mr{PSp}_{2n}(\reals)$ if and only if $\sum m_i\equiv \frac{n(n+1)}{2} \mod 2$.
    \item $\pi_{\infty}$ descends to an L--algebraic discrete series representation of $\mr{PSp}_{2n}(\reals)$ if and only if $n \equiv 0,3 \mod 4$ and $\sum m_i \in 2\ints$.
\end{enumerate}
\end{lem}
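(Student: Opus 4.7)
The plan is that both parts of the lemma are essentially assembled from the calculations carried out in the two paragraphs immediately preceding the statement; the lemma just repackages these observations into a clean criterion.

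For (1), I would invoke the duality $\mr{PSp}_{2n}^{\vee}=\Spin{2n+1}$ and $\Sp{2n}^{\vee}=\SO{2n+1}$ together with local Langlands for real discrete series L--packets to identify the descent of $\pi_\infty$ from $\sp{2n}{\reals}$ to $\mr{PSp}_{2n}(\reals)$ with the existence of a lift of its L--parameter $\phi_\infty: W_\reals \to \so{2n+1}{\cmplx}$ through the covering $\Spin{2n+1}\rsurj \SO{2n+1}$. (Concretely, all members of a discrete series L--packet of $\sp{2n}{\reals}$ have the same central character on $\mu_2=Z(\Sp{2n})$, and this character is trivial --- equivalently, some member descends to $\mr{PSp}_{2n}(\reals)$ --- exactly when $\phi_\infty$ lifts.) The explicit case analysis just before the lemma --- comparing the two candidate values $\pm\omega_1\cdots\omega_n$ of $\tilde\phi_\infty(j)$, which both square to $(-1)^{n(n+1)/2}$, against the forced value $\tilde\phi_\infty(-1)=(-1)^{\sum m_i}$ --- then shows that such a lift exists precisely when $\sum m_i \equiv n(n+1)/2 \pmod 2$.

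For (2), I would combine (1) with the Buzzard--Gee criterion: given a lift $\tilde\phi_\infty$, the associated representation of $\mr{PSp}_{2n}(\reals)$ is L--algebraic iff the exponents of $\tilde\phi_\infty|_{\cmplx^\times}: z \mapsto z^{\sum m_i \lambda_i}\bar z^{-\sum m_i \lambda_i}$ lie in the cocharacter lattice $X_\bullet(T)$ of a maximal torus of $\Spin{2n+1}$. The description $X_\bullet(T)=\{\sum m_i \lambda_i : \sum m_i \in 2\ints\}$ recalled at the start of the section translates this into $\sum m_i \in 2\ints$. Conjoined with (1), the condition $\sum m_i$ even reduces the descent congruence to $n(n+1)/2 \equiv 0 \pmod 2$, i.e.\ $n(n+1)\equiv 0 \pmod 4$, and a quick case check of $n \bmod 4$ yields the equivalence with $n\equiv 0,3 \pmod 4$. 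There is no real obstacle here; the work is entirely contained in the preceding explicit lift computation and the recollection of $X_\bullet(T)$, and the proof amounts to bookkeeping together with invoking the Langlands and Buzzard--Gee dictionaries.
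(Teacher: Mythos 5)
Your proposal is correct and matches the paper's approach exactly: the paper introduces the lemma with ``Therefore we have,'' signaling that the proof is precisely the explicit lift computation in the preceding two paragraphs (the constraint $(-1)^{n(n+1)/2}=(-1)^{\sum m_i}$ from $\tilde\phi_\infty(j)^2=\tilde\phi_\infty(-1)$, plus the description of $X_\bullet(T_{\Spin{2n+1}})$), together with the Langlands and Buzzard--Gee dictionaries and the parity bookkeeping you carry out. Your added remark that discrete series L--packet members share a central character, so descent is equivalent to existence of a $\Spin{2n+1}$-valued lift of $\phi_\infty$, is the correct justification for the step the paper asserts without comment.
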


A trace formula argument shows that there are infinitely many cuspidal automorphic representations $\pi$ of $\mr{PSp}_{2n}(\mb A_{\rats})$ such that its archimedean component $\pi_{\infty}$ satisfies Lemma \ref{L-parameters}, (2) and $\pi$ is Steinberg at a finite place of $\rats$ (cf. \cite{clo:lmf}). Let $\tilde\pi$ be the cuspidal automorphic representation of $\gsp{2n}{\mb A_{\rats}}$ associated to $\pi$ via the natural map $\mr{pr}: \GSp{2n} \rsurj \mr{PSp}_{2n}$. We want to use the main theorem of \cite{kret-shin} to associate to $\tilde \pi$ $\GSpin{2n+1}$--valued Galois representations. 

\begin{lem}\label{L-cohomological}
The infinity component $\tilde\pi_{\infty}$ of $\tilde\pi$ is L--cohomological in the sense of \cite{kret-shin}, i.e. $\tilde{\pi}_{\infty} \cdot |\lambda_0|^{\frac{n(n+1)}{4}}$ is cohomological in the sense of \cite[Definition 1.12]{kret-shin} ($\lambda_0$ corresponds to the similitude character of $\mr{GSp}_{2n}$).
\end{lem}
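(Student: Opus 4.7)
The plan is to apply the standard Vogan--Zuckerman criterion for cohomologicality of discrete series: a discrete series of $\gsp{2n}{\reals}$ with Harish--Chandra parameter $\mu$ is cohomological (in the sense of \cite[Definition 1.12]{kret-shin}) if and only if $\mu = \lambda + \rho_{\mathrm{GSp}_{2n}}$ for some $\lambda \in X^{\bullet}(T)$ dominant integral. So I must compute $\rho$, compute the Harish--Chandra parameter of the twist $\tilde\pi_\infty \otimes |\lambda_0|^{n(n+1)/4}$, and check that their difference is a dominant integral weight.

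Parameterize the maximal torus by $(t_1,\ldots,t_n,\nu) \mapsto \mr{diag}(t_1,\ldots,t_n,\nu/t_n,\ldots,\nu/t_1)$, so $X^{\bullet}(T) = \ints\langle e_1,\ldots,e_n, e_0\rangle$ with $e_i(t)=t_i$ and $e_0(t)=\nu$; the character $\lambda_0$ is identified with $e_0$. The positive roots become $e_i - e_j$ ($i<j$), $e_i+e_j - e_0$ ($i<j$), and $2e_i - e_0$, and a direct computation (summing their $e_i$ and $e_0$ coefficients) yields
\[
\rho_{\mathrm{GSp}_{2n}} = \sum_{k=1}^n (n+1-k)\,e_k \;-\; \tfrac{n(n+1)}{4}\,e_0,
\]
which lies in $X^{\bullet}(T)$ precisely when $n \equiv 0,3 \pmod 4$, matching our standing parity assumption. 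The Harish--Chandra parameter of $\tilde\pi_\infty$ takes the form $\mu = \sum_i m_i\, e_i + m_0\, e_0$: the $e_i$-components are read from $\tilde\phi_\infty|_{\cmplx^\times}$ via the standard dictionary, while $m_0$ is pinned down by the triviality of the central character of $\tilde\pi$ (which is pulled back from $\mathrm{PSp}_{2n}$ via $\mr{pr}$). Since the center of $\mathrm{GSp}_{2n}$ embeds in $T$ as $\{(z,\ldots,z,z^2)\}$, the restriction of $\sum m_i e_i + m_0 e_0$ to the center is $z \mapsto z^{\sum m_i + 2m_0}$, forcing $m_0 = -\tfrac{1}{2}\sum_i m_i$.

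Twisting by $|\lambda_0|^{n(n+1)/4}$ translates $\mu$ by $\tfrac{n(n+1)}{4}\,e_0$; subtracting $\rho$ gives the candidate highest weight
\[
\lambda = \bigl(m_1-n,\; m_2-(n-1),\; \ldots,\; m_n - 1;\; \tfrac{n(n+1) - \sum_i m_i}{2}\bigr).
\]
Integrality of $\lambda$ reduces to $\sum_i m_i \equiv n(n+1) \equiv 0 \pmod 2$, which is precisely the L-algebraicity condition $\sum_i m_i \in 2\ints$ isolated in Lemma~\ref{L-parameters}\,(2). Dominance, $\lambda_1 \geq \cdots \geq \lambda_n \geq 0$, amounts to $m_i > m_{i+1}$ for all $i$ together with $m_n \geq 1$, which is automatic from the regularity of $\phi_\infty$. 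The main obstacle is purely bookkeeping: correctly determining the $e_0$-coefficient of $\rho$ and the $e_0$-coordinate of $\mu$ forced by triviality of the central character. Once these are in hand, the factor $n(n+1)/4$ in the shift $|\lambda_0|^{n(n+1)/4}$ is precisely what is needed to convert L-algebraicity for $\pi$ into C-algebraicity for $\tilde\pi$, and cohomologicality of the twisted representation follows from Vogan--Zuckerman.
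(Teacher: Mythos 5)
Your argument is correct, and the underlying computation is the same one the paper performs, but you organize it differently in a way that is worth noting. The paper first applies the Borel--Wallach criterion at the level of $\Sp{2n}(\reals)$ to realize $\pi_\infty$ as $\xi^\vee_{\mu(\pi_\infty)-\rho}$--cohomological (with $\rho$ the half-sum of positive roots of $\Sp{2n}$), and then extends the coefficient module $\xi$ to a $\mr{GSp}_{2n}$--module $\tilde\xi$ of highest weight $\mu(\tilde\pi_\infty)-\rho+\frac{n(n+1)}{4}\lambda_0$ by matching central characters on $\reals^\times_{>0}$ (using $-1\in K$ to dispose of the component group). You instead work entirely at the $\GSp{2n}$ level: you compute $\rho_{\GSp{2n}}=\sum_k(n+1-k)e_k-\frac{n(n+1)}{4}e_0$, pin down the $e_0$--coordinate of the Harish--Chandra parameter via triviality of the central character, and verify directly that the shifted parameter minus $\rho_{\GSp{2n}}$ is dominant integral. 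What your route buys is transparency about where the exponent $\frac{n(n+1)}{4}$ comes from: it is exactly the $e_0$--component of $\rho_{\GSp{2n}}$, and the twist by $|\lambda_0|^{n(n+1)/4}$ cancels it. In both treatments, integrality of the $e_0$--coordinate reduces to $\sum m_i\in 2\ints$ (Lemma~\ref{L-parameters}(2)), and dominance in the $e_i$--coordinates is the regularity of $\phi_\infty$. One small point of imprecision: $\GSp{2n}(\reals)$ has non-compact center, so ``discrete series'' and ``cohomological'' should be understood relative to the center as in \cite[Definition 1.12]{kret-shin}, which is the criterion you want to cite rather than Vogan--Zuckerman in its usual form; the arithmetic of the check is unchanged.
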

\begin{proof}
Since $\pi_{\infty}$ is a discrete series representation of $\sp{2n}{\reals}$ with infinitesimal character $\mu(\pi_{\infty})$ (where $\mu(\pi_{\infty})$ is determined by writing $\phi_{\infty}(z)=z^{\mu(\pi_{\infty})}\bar z^{\nu(\pi_{\infty})}$), it is $\xi^\vee_{\mu(\pi_{\infty})-\rho}$--cohomological (\cite[Theorem V.3.3]{borel-wallach}), where $\xi_{\mu(\pi_{\infty})-\rho}$ is the highest-weight $\mu(\pi_{\infty})-\rho$ representation of $\mr{Sp}_{2n}(\cmplx)$, and $\rho$ is the half-sum of positive roots for the choice of root basis used also to parametrize highest weights. We need an extension $\tilde{\xi}$ of $\xi_{\mu(\pi_{\infty})-\rho}$ to $\mr{GSp}_{2n}(\cmplx)$ such that (letting $K$ be a maximal compact subgroup of $\mr{Sp}_{2n}(\reals)$)
\[
H^*(\mf{gsp}_{2n}(\cmplx), K\cdot Z_{\mr{GSp}_{2n}}(\reals); \tilde{\pi}_{\infty}\cdot|\lambda_0|^{\frac{n(n+1)}{4}} \otimes \tilde{\xi}^\vee) \neq 0.
\]
From the definition of this cohomology group, and the fact that $\pi_{\infty} \otimes \xi_{\mu(\pi_{\infty})-\rho}^\vee$ has non-zero $(\mf{sp}_{2n}(\cmplx), K)$--cohomology, this reduces to checking that we can find an extension $\tilde{\xi}^\vee$ of $\xi^\vee_{\mu(\pi_{\infty})-\rho}$ such that $\tilde{\xi}$ and $\tilde{\pi}_{\infty}\cdot|\lambda_0|^{\frac{n(n+1)}{4}}$ have the same central character; note that we will only have to check this on $\reals^\times_{>0} \subset Z_{\mr{GSp}_{2n}}(\reals)$ since $-1 \in K$. We choose $\tilde{\xi}$ to be the representation with highest weight $\mu(\tilde{\pi}_{\infty})-\rho+\frac{n(n+1)}{4}\lambda_0 \in X^{\bullet}(T_{\GSp{2n}})$, where $\mu(\tilde{\pi}_{\infty})=\sum m_i\lambda_i$, which (since $\sum_i m_i$ is even) lies in $X_{\bullet}(T_{\Spin{2n+1}})
=X^{\bullet}(T_{\mr{PSp}_{2n}}) \subset X^{\bullet}(T_{\GSp{2n}})$, and both $\rho$ and $\frac{n(n+1)}{4}\lambda_0$ are integral since $n \equiv 0,3 \mod 4$. The lemma is now proven, since this highest weight representation has the same central character as $\tilde{\pi}_{\infty} \cdot |\lambda_0|^{\frac{n(n+1)}{4}}$.
\end{proof}

Therefore, by \cite[Theorem A]{kret-shin}, the cuspidal automorphic representation $\tilde\pi$ of $\gsp{2n}{\mb A_{\rats}}$ (for $n \equiv 0,3 \mod 4$) has an associated weakly compatible system of Galois representations
\[ \rho_{\tilde\pi,l}: \galQ \to \gspin{2n+1}{\bQl}
\]
satisfying the conclusions of \textit{loc.cit.} \footnote{Such a compatible system appears in the cohomology of a Shimura variety for a suitable inner form of $\GSp{2n}$ by the construction of \cite{kret-shin}.} In particular, since the central character of $\tilde\pi$ is trivial, \cite[Theorem A,(i)]{kret-shin} implies that $\rho_{\tilde\pi,l}$ is in fact valued in $\spin{2n+1}{\bQl}$. Moreover, for sufficiently general $\{m_i\}_{1 \leq i \leq n}$, $\tilde\pi_{\infty}$ is spin-regular in the sense of \cite{kret-shin}, i.e. the L--parameter of $\tilde\pi_{\infty}$ maps to a regular parameter for $\GL{2^n}$ by the spin representation. In this case, the Zariski-closure of $\rho_{\tilde\pi,l}$ equals $\Spin{2n+1}$, and thus we obtain a $\Spin{2n+1}$--compatible system of motivic Galois representations with full monodromy. 

In contrast, the following lemma shows that for the remaining $n$, we do not expect the Galois representations associated to cuspidal automorphic representations of $\mr{PSp}_{2n}(\mb A_{\rats})$ that are regular algebraic at infinity to land in $\Spin{2n+1}$ by \cite[Conjecture 5.3.4]{buzzard-gee}. 

\begin{lem}\label{C-algebraic}
Suppose $n \equiv 1,2 \mod 4$ (equivalently, $\frac{n(n+1)}{2}$ is odd) and $\pi$ is a cuspidal automorphic representation of $\sp{2n}{\mb A_{\rats}}$ with trivial central character. Denote by $\pi^{\flat}$ the associated representation of $\mr{PSp}_{2n}(\mb A_{\rats})$. If $\pi$ is regular algebraic, \footnote{Since the dual group $\SO{2n+1}$ is adjoint, L--algebraicity is equivalent to C--algebraicity.} then $\pi^{\flat}$ is C--algebraic but not L--algebraic. 
\end{lem}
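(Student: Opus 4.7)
\emph{Proof plan.} The plan is to turn the statement into a short parity computation on $\sum m_i$, using the Buzzard--Gee definitions of L- and C-algebraic together with the lattice information already recorded in the excerpt. Almost all the nontrivial input—the identification $X^{\bullet}(T_{\mr{PSp}_{2n}}) = X_{\bullet}(T_{\Spin{2n+1}})$ and the descent criterion from $\Sp{2n}$ to $\mr{PSp}_{2n}$—is contained in the preceding pages, so the proof itself is primarily bookkeeping.

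First I would record that $X^{\bullet}(T_{\mr{PSp}_{2n}}) = X_{\bullet}(T_{\Spin{2n+1}})$ sits in $X^{\bullet}(T_{\Sp{2n}}) = \ints^n$ as the sublattice $\{(a_1,\ldots,a_n) : \sum a_i \in 2\ints\}$ (those characters trivial on the center $\{\pm I\}$ of $\Sp{2n}$), and that $\rho := \rho_{\mr{PSp}_{2n}} = \rho_{\Sp{2n}} = (n,n-1,\ldots,1)$ has component sum $n(n+1)/2$. From the $L$-parameter $\phi_{\infty}$ I read off the infinitesimal character of $\pi^{\flat}_{\infty}$ as $\mu = (m_1,\ldots,m_n) \in X^{\bullet}(T_{\Sp{2n}}) \otimes \rats$. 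Unwinding the Buzzard--Gee definitions, $\pi^{\flat}$ is L-algebraic if and only if $\mu \in X^{\bullet}(T_{\mr{PSp}_{2n}})$, equivalently $\sum m_i$ is even; and $\pi^{\flat}$ is C-algebraic if and only if $\mu + \rho \in X^{\bullet}(T_{\mr{PSp}_{2n}})$, equivalently $\sum m_i \equiv n(n+1)/2 \pmod 2$.

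To close, I invoke the descent criterion from the paragraph preceding Lemma \ref{L-parameters}: triviality of the central character of $\pi$ is equivalent to $\pi_{\infty}$ descending to $\mr{PSp}_{2n}(\reals)$, which forces $\sum m_i \equiv n(n+1)/2 \pmod 2$. This yields C-algebraicity of $\pi^{\flat}$ automatically, with no hypothesis on $n$. The parity assumption $n \equiv 1,2 \pmod 4$ makes $n(n+1)/2$ odd, so $\sum m_i$ is also odd, $\mu \notin X^{\bullet}(T_{\mr{PSp}_{2n}})$, and $\pi^{\flat}$ fails to be L-algebraic.

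There is no substantial obstacle here; the computation is essentially immediate once the right lattices are in hand. The only point requiring care is getting the sign/shift convention for C-algebraicity correct (a shift by $+\rho$, not $-\rho$), which one can sanity-check against the standard normalization for weight-$k$ holomorphic modular forms on $\GL{2}$, where L-algebraic corresponds to odd $k$ and C-algebraic to even $k$.
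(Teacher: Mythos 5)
Your lattice bookkeeping is correct and matches the paper in substance: you correctly identify $X^{\bullet}(T_{\mr{PSp}_{2n}}) = X_{\bullet}(T_{\Spin{2n+1}})$ as the index-$2$ sublattice of $\ints^n$ with even coordinate sum, compute $\rho$ (the paper's $\rho^{\vee}$ on the dual side) with component sum $n(n+1)/2$, and deduce the final parity contradiction. However, there is a genuine gap in the step where you "invoke the descent criterion from the paragraph preceding Lemma \ref{L-parameters}." That criterion was established there only for a discrete series $\pi_{\infty}$ whose L--parameter has the pure, tempered form $\phi_{\infty}(z) = \mathrm{diag}(z^{m_1}\bar{z}^{-m_1}, \ldots, z^{m_n}\bar{z}^{-m_n}, 1, \ldots)$. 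In Lemma \ref{C-algebraic}, $\pi$ is only assumed cuspidal and regular algebraic, so a priori $\phi_{\infty}(z) = \mathrm{diag}(z^{m_i}\bar{z}^{l_i}, \ldots)$ with unrelated exponents $l_i$, and the lifting-to-$\Spin{2n+1}$ computation does not simply carry over. The paper closes exactly this gap by invoking Arthur's endoscopic classification to transfer $\pi$ to a self-dual cuspidal automorphic representation of $\gl{2n+1}{\mb A_{\rats}}$ and then applying Clozel's archimedean purity theorem to conclude $l_i = -m_i$, after which (using that the $m_i$ are distinct and nonzero) $\phi_{\infty}$ is forced to be of the discrete-series form. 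Your remark that "there is no substantial obstacle here" underestimates this point: the reduction to the parity computation actually hinges on two substantial theorems that your sketch does not address. Once the form of $\phi_{\infty}$ is justified, the remainder of your argument — the parity dichotomy for L- versus C-algebraicity and the $\GL_2$ sanity check — is correct and matches the paper's reasoning.
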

\begin{proof}
We can write the L--parameter $\phi_{\infty}$ of $\pi_{\infty}$ as 
\[ \phi_{\infty}(z)=diag(z^{m_1}\bar z^{l_1}, ... ,z^{m_n}\bar z^{l_n},1,z^{-m_n}\bar z^{-l_n}, ... ,z^{-m_1}\bar z^{-l_1})
\] with all $m_i, l_i \in \ints$ and $z \in \cmplx^{\times}$.
Arthur’s classification of automorphic representations of classical
groups (\cite{art:end}) implies that $\pi$ has a functorial transfer to a cuspidal automorphic representation of $\gl{2n+1}{\mb A_{\rats}}$, so Clozel's archimedean purity theorem (\cite[Lemme 4.9]{clo:mot}) implies that $m_i+l_i=0$ for all $i$. Since the integers $m_i$ are distinct and nonzero, it is then easy to see that $\phi_{\infty}$ is isomorphic to the L--parameter under the same name defined before Lemma \ref{L-parameters}. Since the L--parameter $\tilde\phi_{\infty}$ of $\pi^{\flat}_{\infty}$ lifts $\phi_{\infty}$, the calculations preceding Lemma \ref{L-parameters} imply that $\sum m_i \equiv \frac{n(n+1)}{2} \equiv 1 \mod 2$. A easy calculation shows that $\rho^{\vee}$ (the half-sum of coroots of $\Spin{2n+1}$) equals $n\lambda_1+(n-1)\lambda_2+\cdots+1\lambda_n$ (which does not lie in $X_{\bullet}(T)$ since $\frac{n(n+1)}{2}$ is odd). It follows that $\sum m_i\lambda_i \in \rho^{\vee}+X_{\bullet}(T)$ and hence $\pi^{\flat}_{\infty}$ is C--algebraic but not L--algebraic ($\tilde\phi_{\infty}(z)=z^{\sum m_i\lambda_i}\bar z^{-\sum m_i\lambda_i}$). 
\end{proof}

However, if $\pi$ is not assumed to be regular at infinity, then $\pi^{\flat}$ can be L--algebraic, in which case the expected Galois representations associated to it lands in $\Spin{2n+1}$ (\cite[Conjecture 3.2.1]{buzzard-gee}).

\end{document}